\theoremstyle{plain}
\newtheorem{theorem}{Théorème}
\newtheorem{lemma}{Lemme}
\newtheorem{proposition}{Proposition}
\newtheorem{notation}{Notation}
\theoremstyle{definition}
\theoremstyle{remark}
\newtheorem{remark}{Remarque}
\date{}
\title{ Représentations de réflexion de groupes de Coxeter\\Troisième partie: les groupes diédraux affines}
\author{François ZARA
}
\begin{document}
\maketitle
\begin{abstract}
Dans cette troisième partie, on fait les hypothèses suivantes: la représentation $R=R(\alpha,\beta,\gamma ;l)$ de $W(p,q,r)$ est réductible et il existe une forme bilinéaire non nulle $G$-invariante où $G=Im R$. Dans ces conditions, la structure de $G$ est connue: $G'=G/N(G)$ est isomorphe à un groupe diédral fini et $N(G)$ est donné explicitement ainsi que l'opération de $G$ sur $N(G)$. On commence par donner des conditions sur les $p,q,r$ ainsi que sur $\alpha,\beta,\gamma$ et on les démontre dans l'appendice. On étudiera le cas général dans la partie suivante.
\end{abstract}
\begin{otherlanguage}{english}
\begin{abstract}
In this third part, we make the following hypothesis: representation $R=R(\alpha,\beta,\gamma ;l)$ of $W(p,q,r)$ is reducible and there exist a $G$-invariant non-nulle bilinear form where $G=Im R$. With those conditions, we know the structure of $G$: $G'=G/N(G)$ is isomorphic to a finite dihedral group and $N(G)$ is given explicitly as well as the action of $G$ on $N(G)$. We begin by giving conditions on $p,q,r$ as well on $\alpha,\beta,\gamma$ and we proove them in the appendix. The general case will be studied in the next part.
\end{abstract}
\end{otherlanguage}
\let\thefootnote\relax\footnote{Mots clés et phrases: groupes de Coxeter, groupes de réflexion, groupes diédraux.}
\let\thefootnote\relax\footnote{Mathematics Subject Classification. 20F55,22E40,51F15,33C45.}
\section{Introduction}
Dans cette partie, on suppose que la représentation $R(\alpha,\beta,\gamma;l)$ de $W(p,q,r)$ est réductible et qu'il existe une forme bilinéaire $G$-invariante non nulle. Alors dans ces conditions, on montre que $G'=G/N(G)$ est isomorphe à un groupe diédral fini et $N(G)$ est donné explicitement  ainsi que l'opération de $G$ sur $N(G)$.  The general case will
\subsection{Contraintes sur les paramêtres}
Les conditions énoncées ci-dessus sont équivalentes aux deux conditions suivantes:
\[
\Delta=8-2\alpha-2\beta-2\gamma-\alpha l-\beta m=0\quad \text{et} \quad\alpha l=\beta m
\]
Nous allons voir que cela implique de fortes contraintes sur $p$, $q$, $r$ d'une part et sur $\alpha$, $\beta$, $\gamma$ d'autre part. Pour cela, nous montrons d'abord le résultat suivant:
\begin{proposition}
Soient $a$, $b$ et $c$ trois nombres réels. On pose: $\alpha:=4\cos^{2}a\pi$, $\beta:=4\cos^{2}b\pi$ et $\gamma:=4\cos^{2}c\pi$. Alors les deux conditions suivantes sont équivalentes:
\begin{enumerate}
  \item (1) $\exists \epsilon,\epsilon' \in \{-1,+1\}$ tels que $c\equiv \epsilon a+\epsilon' b \mod{\mathbb{Z}}$;
  \item (2) $\alpha\beta\gamma=(4-\alpha-\beta-\gamma)^{2}$.
\end{enumerate}
\end{proposition}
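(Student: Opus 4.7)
Ma stratégie est de linéariser $4\cos^{2}$ via $4\cos^{2}x = 2+2\cos 2x$, ramenant (2) à une identité trigonométrique classique que l'on reconnaîtra comme équivalente à (1).

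Premièrement, je poserais $A:=2\cos(2a\pi)$, $B:=2\cos(2b\pi)$, $C:=2\cos(2c\pi)$, de sorte que $\alpha=2+A$, $\beta=2+B$, $\gamma=2+C$, et $4-\alpha-\beta-\gamma=-(2+A+B+C)$. Un calcul direct d'expansion donne
\[
(2+A)(2+B)(2+C)-(2+A+B+C)^{2}=4-(A^{2}+B^{2}+C^{2})+ABC,
\]
de sorte que la condition (2) est équivalente à l'identité
\[
A^{2}+B^{2}+C^{2}-ABC=4,
\]
ou encore, en divisant par $4$,
\[
\cos^{2}(2a\pi)+\cos^{2}(2b\pi)+\cos^{2}(2c\pi)-2\cos(2a\pi)\cos(2b\pi)\cos(2c\pi)=1. \qquad (\ast)
\]

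Deuxièmement, je résoudrais $(\ast)$ vue comme équation du second degré en $z:=\cos(2c\pi)$, les quantités $x:=\cos(2a\pi)$ et $y:=\cos(2b\pi)$ étant fixées. L'équation s'écrit $z^{2}-2xyz+(x^{2}+y^{2}-1)=0$ et son discriminant vaut
\[
4x^{2}y^{2}-4(x^{2}+y^{2}-1)=4(1-x^{2})(1-y^{2})=4\sin^{2}(2a\pi)\sin^{2}(2b\pi).
\]
Par suite, $(\ast)$ équivaut à
\[
\cos(2c\pi)=\cos(2a\pi)\cos(2b\pi)\pm\sin(2a\pi)\sin(2b\pi)=\cos\!\bigl(2a\pi\mp 2b\pi\bigr).
\]

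Troisièmement, cette dernière égalité équivaut à l'existence de $\epsilon,\epsilon'\in\{-1,+1\}$ et d'un entier $k\in\mathbb{Z}$ tels que $2c\pi=\epsilon(2a\pi)+\epsilon'(2b\pi)+2k\pi$, soit, après division par $2\pi$, à $c\equiv \epsilon a+\epsilon' b \pmod{\mathbb{Z}}$, ce qui est la condition (1). Les deux implications s'obtiennent simultanément puisque toutes les étapes sont des équivalences. Le seul point délicat est de vérifier proprement la factorisation du discriminant $4(x^{2}y^{2}-x^{2}-y^{2}+1)=4(1-x^{2})(1-y^{2})$ et le signe dans $\sqrt{\sin^{2}}=|\sin|$, absorbé dans le choix de $\epsilon,\epsilon'$ ; le reste est purement calculatoire.
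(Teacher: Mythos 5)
Votre démonstration est correcte : le calcul $(2+A)(2+B)(2+C)-(2+A+B+C)^{2}=4-(A^{2}+B^{2}+C^{2})+ABC$ est exact, le discriminant se factorise bien en $4(1-x^{2})(1-y^{2})=4\sin^{2}(2a\pi)\sin^{2}(2b\pi)$, et la discussion finale sur $\cos(2c\pi)=\cos(2a\pi\mp 2b\pi)$ recouvre exactement les quatre combinaisons de signes de la condition (1). Votre route diffère sensiblement de celle du texte. Le texte travaille avec les demi-angles $u=\cos a\pi$, $v=\cos b\pi$, $w=\cos c\pi$ et traite les deux implications séparément : pour $(1)\Rightarrow(2)$ il part de $\cos c\pi=\pm(\cos a\pi\cos b\pi-\epsilon\epsilon'\sin a\pi\sin b\pi)$, élève au carré et simplifie ; pour $(2)\Rightarrow(1)$ il factorise le polynôme $(1-u^{2}-v^{2}-w^{2})^{2}-4u^{2}v^{2}w^{2}$ en quatre facteurs linéaires $d_{1}d_{2}d_{3}d_{4}$ dont le produit se réécrit $\sin(c+a-b)\pi\,\sin(a-b-c)\pi\,\sin(c+a+b)\pi\,\sin(a+b-c)\pi$. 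Vous, au contraire, linéarisez d'emblée par $4\cos^{2}x=2+2\cos 2x$, ce qui ramène (2) à l'identité symétrique $\cos^{2}(2a\pi)+\cos^{2}(2b\pi)+\cos^{2}(2c\pi)-2\cos(2a\pi)\cos(2b\pi)\cos(2c\pi)=1$, puis vous résolvez une simple équation du second degré en $\cos(2c\pi)$ : la factorisation quartique du texte est ainsi remplacée par une quadratique, et les deux implications sont obtenues d'un seul coup par une chaîne d'équivalences. Votre version est plus économique et évite la vérification de signe ($\epsilon_{1}$) du sens direct du texte ; la version du texte a l'avantage d'exhiber explicitement les quatre facteurs $\sin(\pm a\pm b\mp c)\pi$ dont l'annulation correspond à chaque choix de $(\epsilon,\epsilon')$.
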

\begin{proof}
1) Montrons que (1) implique (2).\\
Il existe $\epsilon_{1}\in \{-1,+1\}$ tel que $\epsilon_{1}\cos c\pi=\cos a\pi \cos b\pi-\epsilon\epsilon'\sin a\pi \sin b\pi$.\\
Nous obtenons:
\begin{align*}
\cos^{2}c\pi=\cos^{2}a\pi \cos^{2}b\pi + \sin^{2}a\pi \sin^{2}b\pi -2\epsilon\epsilon'\cos a\pi \sin a\pi \cos b\pi sin b\pi\\
=1-\cos^{2}a\pi - \cos^{2}b\pi +2\cos^{2}a\pi cos^{2}b\pi -2\epsilon\epsilon'\cos a\pi \sin a\pi \cos b\pi sin b\pi
\end{align*}
d'où, en multipliant cette égalité par $8$:
\[
2\gamma=8-2\alpha-2\beta+\alpha\beta-4\epsilon\epsilon' \sin2a\pi \sin 2b\pi.
\]
Nous en déduisons
\begin{equation*}
\begin{split}
16\sin^{2}2a\pi \sin^{2}2b\pi\\
&=(8-2\alpha-2\beta-2\gamma+\alpha\beta)^{2}\\
&=16(1-2\cos^{2}2a\pi)(1-2cos^{2}2b\pi)\\
&=16(1-\cos 2a\pi)(1+\cos 2a\pi)(1-\cos 2b\pi)(1+\cos 2b/pi)\\
&=16(2-2\cos^{2}a\pi)2\cos^{2}a\pi (2-2\cos^{2}b\pi)2\cos^{2}b\pi\\
&=(4-\alpha)\alpha(4-\beta)\beta\\
&=16\alpha\beta-4\alpha^{2}\beta-4\alpha\beta^{2}+\alpha^{2}\beta^{2}\\
&=(8-2\alpha-2\beta-2\gamma)^{2}+\alpha^{2}\beta^{2}+2\alpha\beta(8-2\alpha-2\beta-2\gamma)
\end{split}
\end{equation*}
d'où, après simplifications
\[
4\alpha\beta\gamma=(8-2\alpha-2\beta-2\gamma)^{2}=4(4-\alpha-\beta-\gamma)^{2}
\]
c'est la relation (2)\\
2) Montrons que (2) implique (1).\\
Posons, pour simplifier l'écriture: $u:=\cos a\pi$, $v:=\cos b\pi$ et $w:= \cos c\pi$ de telle sorte que $\alpha=4u^{2}$, $\beta=4v^{2}$ et $\gamma=4w^{2}$. La relation (2) devient 
\[
64u^{2}v^{2}w^{2}=16(1-u^{2}-v^{2}-w^{2})^{2}
\]
d'où
\[
(1-u^{2}-v^{2}-w^{2})^{2}-4u^{2}v^{2}w^{2}=0=(1-u^{2}-v^{2}-w^{2}+2uvw)(1-u^{2}-v^{2}-w^{2}-2uvw).
\]
Nous brisons maintenant la symétrie entre $u$, $v$ et $w$ pour obtenir:
\[
((w-uv)^{2}-(1-u^{2})(1-v^{2}))((w+uv)^{2}-(1-u^{2})(1-v^{2}))=0.
\]
D'après les valeurs de $u$, $v$ et $w$, nous voyons que $uv=\cos a\pi \cos b\pi$ et $(1-u^{2})(1-v^{2})=\sin^{2}a\pi \sin^{2}b\pi$, donc:
\[
0=((w-\cos a\pi \cos b\pi)^{2}-(\sin a\pi \sin b\pi)^{2})((w+\cos a\pi \cos b\pi)^{2}-(\sin a\pi \sin b\pi)^{2})
\]et $0=d_{1}d_{2}d_{3}d_{4}$ où:
\[
\begin{aligned}
d_{1} &= w-(\cos a\pi \cos b\pi+\sin a\pi \sin b\pi) &= w-\cos(a-b)\pi\\
d_{2} &= w-(\cos a\pi \cos b\pi-\sin a\pi \sin b\pi) &= w-\cos(a+b)\pi\\
d_{3} &=w+(\cos a\pi \cos b\pi+\sin a\pi \sin b\pi) &= w+\cos(a-b)\pi\\
d_{4} &=w+(\cos a\pi \cos b\pi-\sin a\pi \sin b\pi) &= w+\cos(a+b)\pi\\
\end{aligned}
\]
Comme $\cos^{2}x-\cos^{2}y=\sin (x+y)\sin(y-x)$, nous obtenons:
\[
\begin{aligned}
d_{1}d_{3} &= w^{2}-\cos^{2}(a-b)\pi &= \sin(c+a-b)\pi \sin(a-b-c)\pi\\
d_{2}d_{4} &= w^{2}-\cos^{2}(a+b)\pi &= \sin(c+a+b)\pi \sin(a+b-c)\pi
\end{aligned}
\]
d'où finalement:
\[
0=\sin(c+a-b)\pi \sin(a-b-c)\pi\sin(c+a+b)\pi \sin(a+b-c)\pi,
\]
comme $\sin \pi x =0$ si et seulement si $x$ est entier, nous voyons qu'il existe $\epsilon$ et $\epsilon'$ dans $\{-1,+1\}$ tels que $c\equiv \epsilon a+\epsilon' b \mod{\mathbb{Z}}$ c'est la condition (1).
\end{proof}

Pour expliciter les relations entre $p$, $q$, $r$ d'une part et $\alpha$, $\beta$, $\gamma$ d'autre part, nous utilisons le résultat suivant démontré dans l'appendice :
Les deux conditions suivantes $(C)$ et $(D)$ sur le triple d'entiers non nuls $(a_{1},a_{2},a_{3})$ sont équivalentes:
\begin{align*}
(C)
\begin{cases}
(C_{1}) & n=ppcm(a_{1},a_{2},a_{3})=ppcm(a_{i},a_{j}) (1\leqslant i\neq j \leqslant 3);\\
(C_{2}) & \parbox{11 cm}{%
$\exists i,j \in \mathbb{N}$ tels que $(1\leqslant i\neq j \leqslant 3)$ et $v_{2}(a_{i})=v_{2}(a_{j})=v_{2}(n)$;
si $|\{i,j,k\}|=3$, $v_{2}(a_{k})<v_{2}(n)$.}
\end{cases}\\
(D)
\begin{cases}
\exists c_{i}  \in \mathbb{Z} (1\leqslant i \leqslant 3) & \text{tels que}\\
(D_{1}) & \sum_{i=1}^{i=3}\frac{c_{i}}{a_{i}}\in \mathbb{Z};\\
(D_{2}) & pgcd(c_{i},a_{i})=1 (1\leqslant i \leqslant 3).
\end{cases}
\end{align*}
(Si $p$ est un nombre premier et $a$ un entier, $v_{p}(a)$ désigne l'exposant de la plus grande puissance de $p$ qui divise $a$, à ne pas confondre avec le polynôme $v_{p}(X)$!).\\
La condition ($C_{1}$) implique que si $p$ est un nombre premier qui divise $n$, alors il existe des entiers distincts $i$ et $j$ tels que $v_{p}(n)=v_{p}(a_{i})=v_{p}(a_{j})$ et si $|\{i,j,k\}|=3$, alors $v_{p}(a_{k})\leqslant v_{p}(n)$.
\begin{proposition}
Si $\Delta=0$ et $\alpha l=\beta m$, alors le triple d'entiers satisfait à la condition (C).\\
Réciproquement, si le triple d'entiers $(p,q,r)$ satisfait à la condition (C), alors étant donné $\alpha$ une racine de $v_{p}(X)$, on peut trouver $\beta$ racine de $v_{q}(X)$ et $\gamma$ racine de $v_{r}(X)$ tels que l'on ait $\Delta=0$ et $\alpha l=\beta m$.
\end{proposition}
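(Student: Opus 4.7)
Le plan est de se ramener à la Proposition 1 en observant que, dans la représentation $R(\alpha,\beta,\gamma;l)$, les paramètres vérifient $lm=\gamma$ (relation établie dans les parties précédentes). Sous cette identification, le système $\{\Delta=0,\ \alpha l=\beta m\}$ se traduira en l'identité trigonométrique $\alpha\beta\gamma=(4-\alpha-\beta-\gamma)^2$ de la Proposition 1, laquelle est à son tour équivalente à une condition diophantienne du type $(D)$, donc à $(C)$ par l'équivalence $(C)\Leftrightarrow(D)$ démontrée dans l'appendice.

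Pour le sens direct, j'observerais que les hypothèses $\Delta=0$ et $\alpha l=\beta m$ se combinent immédiatement en $\alpha l=\beta m=4-\alpha-\beta-\gamma$, d'où $\alpha\beta lm=(4-\alpha-\beta-\gamma)^2$; utilisant $lm=\gamma$, j'obtiendrais $\alpha\beta\gamma=(4-\alpha-\beta-\gamma)^2$. La Proposition 1 fournirait alors $\epsilon,\epsilon'\in\{-1,+1\}$ tels que $c\equiv\epsilon a+\epsilon'b\pmod{\mathbb{Z}}$. Écrivant $\alpha,\beta,\gamma$ comme racines de $v_p,v_q,v_r$, soit $a=s/p$, $b=t/q$, $c=u/r$ avec $\gcd(s,p)=\gcd(t,q)=\gcd(u,r)=1$, cette congruence deviendrait
\[
\frac{-\epsilon s}{p}+\frac{-\epsilon' t}{q}+\frac{u}{r}\in\mathbb{Z},
\]
c'est-à-dire la condition $(D_1)$ pour $c_1:=-\epsilon s$, $c_2:=-\epsilon' t$, $c_3:=u$, qui satisfont aussi $(D_2)$; l'équivalence de l'appendice livrerait alors $(C)$ pour le triple $(p,q,r)$.

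Pour la réciproque, $(D)$ fournit des entiers $c_1,c_2,c_3$ vérifiant $\sum c_i/a_i\in\mathbb{Z}$ et $\gcd(c_i,a_i)=1$. Étant donné $\alpha=4\cos^2(s\pi/p)$ racine de $v_p$ (avec $\gcd(s,p)=1$), je commencerais par exploiter la liberté du choix des $c_i$ (addition de multiples de $a_i$ et changement de signes préservant $(D)$) pour imposer $c_1\equiv\pm s\pmod{p}$. Posant alors $\beta:=4\cos^2(c_2\pi/q)$ et $\gamma:=4\cos^2(c_3\pi/r)$, ce sont des racines de $v_q$ et de $v_r$ grâce à $(D_2)$, et la condition $(D_1)$ s'interpréterait comme la congruence angulaire de la Proposition 1; d'où $\alpha\beta\gamma=(4-\alpha-\beta-\gamma)^2$. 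Via $lm=\gamma$ et un ajustement des signes, j'en déduirais $\alpha l=\beta m=4-\alpha-\beta-\gamma$, soit $\Delta=0$ et $\alpha l=\beta m$.

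Le point le plus délicat sera la gestion des signes dans la réciproque : passer de l'identité carrée $\alpha\beta lm=(4-\alpha-\beta-\gamma)^2$ à la double égalité $\alpha l=\beta m=4-\alpha-\beta-\gamma$ exigera de contrôler conjointement les signes $\epsilon,\epsilon'$ apparus dans la démonstration de la Proposition 1 (à travers les quatre facteurs $d_1,d_2,d_3,d_4$) et les dépendances explicites de $l$ et $m$ en $\alpha$ et $\beta$ héritées de la structure de la représentation réductible étudiée dans les parties antérieures.
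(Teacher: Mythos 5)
Your proposal follows essentially the same route as the paper: both directions reduce, via the relation $lm=\gamma$, to the identity $\alpha\beta\gamma=(4-\alpha-\beta-\gamma)^{2}$, which Proposition 1 converts into the congruence $c\equiv\epsilon a+\epsilon' b\pmod{\mathbb{Z}}$, c'est-à-dire la condition $(D)$, donc $(C)$ par l'appendice. (The paper packages the first reduction as the observation that $\alpha l$ and $\beta m$ are the two roots of $Q(X)=X^{2}-2(4-\alpha-\beta-\gamma)X+\alpha\beta\gamma$, which coincide exactly when the discriminant $(4-\alpha-\beta-\gamma)^{2}-\alpha\beta\gamma$ vanishes — a cosmetic difference.) Two remarks. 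First, the ``delicate sign issue'' you flag in the converse is not actually there: $l$ is a free parameter of $R(\alpha,\beta,\gamma;l)$, so one simply sets $\alpha l:=4-\alpha-\beta-\gamma$ and $m:=\gamma/l$; then $\alpha l(4-\alpha-\beta-\gamma)=(4-\alpha-\beta-\gamma)^{2}=\alpha\beta\gamma=\alpha\beta lm$ gives $\beta m=4-\alpha-\beta-\gamma$ at once, whence $\Delta=0$ and $\alpha l=\beta m$ with no case analysis on $\epsilon,\epsilon'$. Second, your mechanism for forcing $c_{1}\equiv\pm s\pmod{p}$ is insufficient as stated: adding multiples of $a_{1}=p$ to $c_{1}$ does not change $c_{1}\bmod p$, and a global sign change only yields $-c_{1}$, so these operations reach only the residues $\pm c_{1}\bmod p$. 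The correct appeal is to the appendix's proof of $(C)\Rightarrow(D)$, which builds a solution after \emph{first} choosing $c_{1}$ arbitrarily with $1\leqslant c_{1}<a_{1}$ and $pgcd(c_{1},a_{1})=1$ (alternatively, multiply a given solution $(c_{1},c_{2},c_{3})$ by a unit $u$ modulo $n$, which preserves $(D)$ and acts transitively on the admissible residues of $c_{1}$).
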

\begin{proof}
Si $\Delta=0$ alors $\alpha l$ et $\beta m$ sont racines du polynôme 
\[
Q(X)=X^{2}-2(4-\alpha-\beta-\gamma)X+\alpha\beta\gamma
\]
et l'on aura $\alpha l=\beta m$ si et seulement si son discriminant $(4-\alpha-\beta-\gamma)^{2}-\alpha\beta\gamma$ est nul.\\
Comme $\alpha=4\cos^{2}\frac{k_{1}\pi}{p}$, $\beta=4\cos^{2}\frac{k_{2}\pi}{q}$, $\gamma=4\cos^{2}\frac{k_{3}\pi}{r}$ avec 
\[
pgcd(k_{1},p)=pgcd(k_{2},q)=pgcd(k_{3},r)=1
\]
d'après la proposition 1, il existe $\epsilon$ et $\epsilon'$ dans $\{-1,+1\}$ tels que 
\[
\frac{k_{3}}{r}-(\epsilon\frac{k_{1}}{p}+\epsilon'\frac{k_{2}}{q})\in \mathbb{Z}.
\]
C'est la condition (D), donc $(p,q,r)$ satisfait à la condition (C).\\
Réciproquement, d'après ce qui précède nous avons $(4-\alpha-\beta-\gamma)^{2}=\alpha\beta\gamma$ et $l$ est tel que $\alpha l=4-\alpha-\beta-\gamma$ donc $\alpha l(4-\alpha-\beta-\gamma)=\alpha\beta lm$ d'où $\beta m=(4-\alpha-\beta-\gamma)$. Dans ces conditions $\Delta=0$ et $\alpha l= \beta m$. la précision sur $\alpha$, $\beta$ et $\gamma$ vient des résultats de l'appendice.
\end{proof}
\begin{remark}
Si $r=2$ et si $\Delta=0$, alors $l=m=0$, donc $\alpha l= \beta m$. La condition (C) exprime que l'on a l'une des deux possibilités suivantes, où l'on a supposé que $p\leqslant q$:
\begin{itemize}
  \item $p$ impair et $q=2p$;
  \item $p$ pair, alors $q=p$ et $4|p$.
 \end{itemize}
\end{remark}
\subsection{Le groupe $G'$}
Pour trouver la structure de $G'$ et de $G$ lorsque $\Delta=0$ et $\alpha l=\beta m$, nous aurons besoin de certains résultats préliminaires.\\
\begin{proposition}
Soient $H$ un groupe commutatif fini non d'exposant $2$ et $\sigma$ un automorphisme de $H$ tel que $\forall h \in H,\sigma(h)=h^{-1}$. On pose $L:=H\rtimes<\sigma>$. Alors $L$ possède une représentation fidèle irréductible $T$ sur un corps algébriquement clos de caractéristique $0$ si et seulement si $H$ est un groupe cyclique (et donc $L$ est un groupe diêdral). Dans ce cas $\deg T=2$.
\end{proposition}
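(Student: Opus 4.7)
Je propose d'appliquer la théorie de Clifford au sous-groupe abélien distingué $H$ de $L$. Puisque $T$ est irréductible sur un corps algébriquement clos de caractéristique nulle et que $H\trianglelefteq L$ est abélien, $T|_H$ se décompose en somme de caractères $1$-dimensionnels formant une unique orbite sous la conjugaison par $\langle\sigma\rangle$, chacun apparaissant avec la même multiplicité $e$. L'action induite sur $\widehat{H}$ envoie $\chi$ sur $\chi\circ\sigma^{-1}=\chi^{-1}$ (car $\sigma(h)=h^{-1}$), donc les orbites ont cardinal $1$ (si $\chi^{2}=1$) ou $2$.

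\noindent Pour le sens direct, je suppose $T$ fidèle et je prends $\chi$ un caractère apparaissant dans $T|_H$. Que l'orbite soit $\{\chi\}$ ou $\{\chi,\chi^{-1}\}$, on a $\ker(T|_H)=\ker\chi$ (car $\ker\chi^{-1}=\ker\chi$); la fidélité de $T$ entraîne donc $\ker\chi=\{1\}$, c'est-à-dire $\chi$ est un caractère fidèle de $H$. Si l'orbite était de cardinal $1$, on aurait $\chi^{2}=1$, donc $\chi(H)\subset\{\pm 1\}$, et $H$ s'injecterait dans $\{\pm 1\}$, contredisant l'hypothèse que $H$ n'est pas d'exposant $2$. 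L'orbite est donc de cardinal $2$, le groupe d'inertie $I_L(\chi)$ vaut $H$, et la seconde partie du théorème de Clifford donne $T\simeq\mathrm{Ind}_H^L\chi$, d'où $\deg T=2$ et $T|_H=\chi+\chi^{-1}$. Enfin, $H$ admettant un caractère fidèle, il est cyclique, et $L=H\rtimes\langle\sigma\rangle$ est alors le groupe diédral d'ordre $2|H|$.

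\noindent Pour la réciproque, en supposant $H$ cyclique d'ordre $n\geqslant 3$, je choisis $\chi$ engendrant $\widehat{H}$ (donc $\chi$ fidèle) et pose $T:=\mathrm{Ind}_H^L\chi$. Comme $\chi\neq\chi^{-1}$, le critère de Mackey (ou le calcul direct du produit scalaire des caractères) montre que $T$ est irréductible de degré $2$, avec $T|_H=\chi+\chi^{-1}$. L'unique point à vérifier est la fidélité: $\ker T$ est distingué dans $L$ et d'intersection triviale avec $H$ (puisque $\chi$ est fidèle), donc s'injecte dans $L/H\simeq\mathbb{Z}/2\mathbb{Z}$. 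S'il contenait un élément non trivial, celui-ci serait de la forme $h\sigma$, et la normalité de $\ker T=\{1,h\sigma\}$ imposerait $g(h\sigma)g^{-1}=g^{2}h\sigma=h\sigma$ pour tout $g\in H$, forçant $H$ d'exposant $2$: contradiction.

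\noindent L'obstacle principal est cette dernière vérification de fidélité dans la réciproque, qui exploite de façon essentielle l'hypothèse sur l'exposant; tout le reste découle presque mécaniquement de Clifford une fois identifiée l'action de $\sigma$ sur $\widehat{H}$.
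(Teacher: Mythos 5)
Votre démonstration est correcte, mais elle suit une route réellement différente de celle du texte. Vous travaillez par théorie de Clifford sur le sous-groupe abélien distingué $H$ d'indice $2$ : identification de l'action de $\sigma$ sur $\widehat{H}$ comme $\chi\mapsto\chi^{-1}$, discussion des orbites de cardinal $1$ ou $2$, égalité $\ker(T|_H)=\ker\chi$ qui force $\chi$ fidèle donc $H$ cyclique, puis, pour la réciproque, construction explicite $T=\mathrm{Ind}_H^L\chi$ avec le critère de Mackey et la vérification de fidélité par normalité de $\ker T$ (le calcul $g(h\sigma)g^{-1}=g^{2}h\sigma$ est exactement le bon endroit où l'hypothèse sur l'exposant intervient). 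Le texte, lui, invoque le théorème de Gasch\"utz (un groupe fini possède une représentation irréductible fidèle si et seulement si son socle est engendré par une classe de conjugaison), observe que le socle $S$ de $L$ est contenu dans $H$ et que les classes de conjugaison dans $S$ sont $\{s\}$ ou $\{s,s^{-1}\}$, d'où l'équivalence avec la cyclicité de $S$, puis de $H$ ; l'inégalité $\deg T\leqslant 2$ provient seulement de l'existence du sous-groupe abélien distingué d'indice $2$. Votre approche est plus longue mais autonome (elle ne repose que sur Clifford et Mackey, résultats standards), et elle donne davantage : l'identification explicite $T\simeq\mathrm{Ind}_H^L\chi$ avec $T|_H=\chi+\chi^{-1}$, ce qui décrit toutes les représentations fidèles irréductibles et non pas seulement leur existence. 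L'approche du texte est plus courte mais dépend d'un théorème moins usuel cité sans démonstration. Seule réserve mineure, commune aux deux preuves : le cas dégénéré $|H|\leqslant 2$ est écarté par l'hypothèse « non d'exposant $2$ », qu'il faut lire comme excluant aussi le groupe trivial pour que la conclusion $\deg T=2$ ait un sens.
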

\begin{proof}
Nous allons utiliser le théorème suivant de Gasch\"utz (\cite{G}):"Un groupe fini possède une représentation irréductible et fidèle si et seulement si son socle est engendré par une classe de conjugaison".\\
Soit $T$ une représentation irréductible de $L$. Comme $H$ est un sous-groupe normal commutatif de $L$, nous savons que $\deg T\leqslant 2$. Si $\deg T=1$, alors $D(L)\subset \ker T$ et comme $D(L) \neq {1}$ car $L$ n'est pas commutatif, nous voyons que $T$ n'est pas fidèle dans ce cas. Ainsi $\deg T=2$.

Soit $S$ le socle de $L$. Il est clair que $S \subset H$. Pour chaque $s$ dans $S$, nous avons $\sigma s \sigma^{-1}=s \,\text{ou} \,\sigma s \sigma^{-1}=s^{-1}$, donc la classe de conjugaison de $s$ est $\{s\}$ ou $\{s,s^{-1}\}$. Nous voyons ainsi, grâce au théorème de Gasch\"utz que $L$ possède une représentation fidèle et irréductible si et seulement si $S$ est un groupe cyclique. D'après la structure des groupes commutatifs finis, $S$ est un groupe cyclique si et seulement si $H$ est groupe cyclique. Dans ce cas $L$ est un groupe diédral (d'ordre $\geqslant 6$).
\end{proof}
\begin{notation}
On suppose que le triple d'entiers non nuls $(p,q,r)$ satisfait à la condition $(C_{1})$. On pose $n:=ppcm(p,q,r)$, $d:= pgcd(p,q,r)$, $n=pp_{1}=qq_{1}=rr_{1}$ de telle sorte que $pgcd(p_{1},q_{1})=pgcd(q_{1},r_{1})=pgcd(r_{1},p_{1})=1$, $n=p_{1}q_{1}r_{1}d$, $p=q_{1}r_{1}d$, $q=r_{1}p_{1}d$ et $r=p_{1}q_{1}d$.
\end{notation}
\begin{proposition}
Soit $G"$ le quotient de $W(p,q,r)$ (avec comme système générateur canonique $(t_{1},t_{2},t_{3})$) obtenu en ajoutant la relation $(t_{1}t_{2}t_{3})^{2}=1$.\\ Soit $H":=<t_{1}t_{2},t_{1}t_{3}>$. Alors:\\
1) $H"$ est un sous-groupe commutatif normal de $G"$.\\
2)Le triple d'entiers $(p,q,r)$ satisfait à la condition ($C_{1}$).\\
3) $H"\simeq \mathbb{Z}/d\mathbb{Z}\times \mathbb{Z}/n\mathbb{Z}$.
\end{proposition}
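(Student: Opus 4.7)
The plan is to set $u := t_1t_2$, $v := t_1t_3$, $w := t_2t_3 = u^{-1}v$, and to exploit the equivalent form of the added relation $t_1t_2t_3 = t_3t_2t_1$ in $G''$ throughout.

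For (1), I would verify that each $t_i$ inverts both $u$ and $v$ under conjugation. The identities $t_1 u t_1 = u^{-1}$, $t_1 v t_1 = v^{-1}$, and $t_2 u t_2 = u^{-1}$ are immediate. For $t_2 v t_2$, left-multiplying $t_1t_2t_3 = t_3t_2t_1$ by $t_2t_1$ yields $t_3 = t_2t_1t_3t_2t_1$, hence $t_2(t_1t_3)t_2 = t_3t_1 = v^{-1}$; the computations for $t_3$ are symmetric. This gives normality of $H''$. Commutativity follows at once: since $u = t_1t_2 \in H''$, one has $u v u^{-1} = t_1(t_2vt_2^{-1})t_1^{-1} = t_1 v^{-1} t_1^{-1} = v$, so $[u,v] = 1$.

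For (3), with $H''$ known to be abelian, I would apply Reidemeister--Schreier to the index-$2$ subgroup $H'' \trianglelefteq G''$ (coset representatives $\{1, t_1\}$) to extract a presentation of $H''$. The Coxeter relators rewrite as $u^{m_{12}} = v^{m_{13}} = (u^{-1}v)^{m_{23}} = 1$, while the added relator $(t_1t_2t_3)^2 = 1$ rewrites precisely as $[u,v] = 1$; here $(m_{12}, m_{13}, m_{23})$ is the triple $(p,q,r)$ in the paper's convention for the orders of $t_it_j$. Thus $H'' \simeq \mathbb{Z}^2/L$, where $L$ is spanned by $(m_{12},0)$, $(0,m_{13})$, and $(-m_{23}, m_{23})$. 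A Smith normal form computation gives invariants $d_1 = \gcd(m_{12}, m_{13}, m_{23}) = d$ and $d_1 d_2 = \gcd(m_{12}m_{13},\, m_{12}m_{23},\, m_{13}m_{23})$. Checking prime by prime: sorting the valuations $v_\ell(p),\, v_\ell(q),\, v_\ell(r)$ as $\lambda_1 \leq \lambda_2 \leq \lambda_3$, we get $v_\ell(d_1 d_2) = \lambda_1 + \lambda_2$ and $v_\ell(dn) = \lambda_1 + \lambda_3$, so $d_1 d_2 = dn$ iff $\lambda_2 = \lambda_3$ at every prime, i.e.\ iff $(C_1)$ holds. Combined with Proposition~2 applied to the running hypothesis $\Delta = 0$, $\alpha l = \beta m$, this yields (2); and then $d_2 = n$ gives $H'' \simeq \mathbb{Z}/d\mathbb{Z} \times \mathbb{Z}/n\mathbb{Z}$, which is (3).

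The main obstacle is the Reidemeister--Schreier step: confirming that the cubic relator $(t_1t_2t_3)^2 = 1$ contributes exactly the commutator $[u,v] = 1$ in $H''$ (no more, no less), since this is what pins down its structure as $\mathbb{Z}^2/L$ rather than a proper quotient. The remaining Smith normal form calculation and the $\ell$-adic valuation argument for $(C_1)$ are then straightforward bookkeeping.
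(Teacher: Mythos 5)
Your parts (1) and (3) are correct, but they relate differently to the paper. For (1) you do essentially what the paper does: Zara sets $a=t_{1}t_{2}$, $b=t_{2}t_{3}$, $c=t_{3}t_{1}$, gets $abc=acb=1$ from the added relation and deduces commutativity, then checks that each $t_{i}$ inverts the generators. For (3) you take a genuinely different route: the paper splits $H''$ into its $\pi$-primary components, writes a $3\times 2$ relation matrix for each prime and reduces it by elementary operations (after normalizing the valuations via the appendix), whereas you run Reidemeister--Schreier once and take the Smith normal form of the single integer matrix with rows $(p,0)$, $(0,q)$, $(-r,r)$, getting $d_{1}=pgcd(p,q,r)=d$ and $d_{1}d_{2}=pgcd(pq,qr,rp)$, which equals $dn$ precisely under $(C_{1})$. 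The step you flag as the main obstacle does go through: the transversal $\{1,t_{1}\}$ rewrites the Coxeter relators to $u^{p}$, $v^{q}$, $(u^{-1}v)^{r}$, and $(t_{1}t_{2}t_{3})^{2}$ rewrites to $uv^{-1}u^{-1}v$ (its $t_{1}$-conjugate to $u^{-1}vuv^{-1}$), i.e.\ exactly to $[u,v]=1$. Your version is more global and has the merit of making explicit that $H''\simeq\mathbb{Z}/d\mathbb{Z}\times\mathbb{Z}/n\mathbb{Z}$ is \emph{equivalent} to $(C_{1})$, which the paper leaves implicit.

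The genuine gap is part (2). You propose to import $(C_{1})$ from Proposition~2 under the running hypothesis $\Delta=0$, $\alpha l=\beta m$; but that hypothesis is not part of this statement, and the content of (2) is that $(C_{1})$ is forced by the group $G''$ alone. The paper's argument is internal: in the commutative group $H''$ one has $b=a^{-1}c^{-1}$, so the order of each of $a,b,c$ divides the $ppcm$ of the orders of the other two, which is exactly $(C_{1})$ for the triple of orders of the $t_{i}t_{j}$ in $G''$. (The paper phrases this as ``l'ordre de $ac$ est $ppcm(p,q)$'', which is stronger than what holds in a general abelian group, but the divisibility in the useful direction suffices.) You already have the needed identity in your notation, $w=u^{-1}v$, hence $\mathrm{ord}(v)\mid ppcm(\mathrm{ord}(u),\mathrm{ord}(w))$ and cyclically; you should use that rather than Proposition~2. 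Note finally that for a triple such as $(2,3,5)$ your own Smith form shows $H''$ is trivial, so (2) can only be about the orders of the $t_{i}t_{j}$ \emph{in the quotient} $G''$; a word on why these orders remain $p,q,r$ in the situations where the proposition is applied would tighten both your argument and the paper's.
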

\begin{proof}
1) Posons $a:=t_{1}t_{2}$, $b:=t_{2}t_{3}$ et $c:=t_{3}t_{1}$. Il est clair que $abc=1$. De plus $acb=(t_{1}t_{2}t_{3})^{2}=1$, $cb=bc$. On voit de la même manière que $ac=ca$ et $ab=ba$: le groupe $H"$ est commutatif.\\
2) Comme $t_{1}at_{1}^{-1}=a^{-1}$ et $t_{1}ct_{1}^{-1}=c^{-1}$ et comme $G"=<H",t_{1}>$, nous voyons que $H"$ est un sous-groupe normal d'indice $2$ de $G"$ dont tous les éléments sont inversés par $t_{i}$ $(1\leqslant i \leqslant 3)$. Il en résulte que l'ordre de $ac$ est $ppcm(p,q)$ donc $r\,| \,ppcm(p,q)$. Par symétrie, on voit que $p\,| \,ppcm(q,r)$ et $q\,| \,ppcm(r,p)$: le triple d'entiers $(p,q,r)$ satisfait à la condition $(C_{1})$.\\
3) Soit $\pi$ un nombre premier et soit $\Pi$ la partie $\pi$-primaire de $|H"|$. Nous avons
\[
H"=\oplus_{\pi \in \mathcal{P}}\Pi
\]
où $\mathcal{P}$ désigne l'ensemble des nombres premiers positifs.\\
Posons $p=\pi^{n(p)}p'$, $q=\pi^{n(q)}q'$ et $r=\pi^{n(r)}r'$ où $p'$, $q'$ et $r'$ sont premiers à $p$. Posons $a':=a^{p'}$, $b':= b^{r'}$ et $c':=c^{q'}$. Alors $a'$ est d'ordre $\pi^{n(p)}$, $b'$ est d'ordre $\pi^{n(r)}$ et $c'$ est d'ordre $\pi^{n(q)}$.\\
Nous avons
\[
\Pi=<a',c'\,|\,a'^{\pi^{n(p)}}=c'^{\pi^{n(r)}}=[a',c']=(a'c')^{\pi^{n(q)}}=1>
\]
Nous pouvons supposer que $n(p)=n(r)$ et $n(q)\leqslant n(p)$ (voir l'appendice ). Nous effectuons des opérations élémentaires sur la matrice des relations de $\Pi$:
\[
\begin{pmatrix}
\pi^{n(p)} & 0\\
0 & \pi^{n(p)}\\
\pi^{n(q)} & \pi^{n(q)}
\end{pmatrix}
\to
\begin{pmatrix}
\pi^{n(p)} & -\pi^{n(p)}\\
0 & \pi^{n(p)}\\
\pi^{n(q)} & 0
\end{pmatrix}
\to
\begin{pmatrix}
\pi^{n(p)} & 0\\
0 & \pi^{n(p)}\\
\pi^{n(q)} & 0
\end{pmatrix}
\]
\[
\to
\begin{pmatrix}
\pi^{n(q)} & 0\\
0 & \pi^{n(p)}\\
\pi^{n(p)} & 0
\end{pmatrix}
\to
\begin{pmatrix}
\pi^{n(q)} & 0\\
0 & \pi^{n(p)}\\
0 & 0
\end{pmatrix}
\]
Il en résulte que $\Pi \simeq \mathbb{Z}/\pi^{n(q)}\mathbb{Z} \times \mathbb{Z}/\pi^{n(p)}\mathbb{Z}$. Comme $\pi^{n(q)}$ est la $\pi$-contribution  de $pgcd(p,q,r)$ et $\pi^{n(p)}$ est la $\pi$-contribution  de $ppcm(p,q,r)$, nous avons le résultat car $H"$ est le produit de ses sous-groupes de Sylow.
\end{proof}
Nous avons maintenant assez d'éléments pour le résultat suivant:
\begin{theorem}
Soient $W(p,q,r)$ un groupe de Coxeter et $R$ une représentation de réflexion  de $W(p,q,r)$. On suppose que le triple d'entiers $(p,q,r)$ satisfait à la condition (C). Soit $\alpha$ une racine de $v_{p}(X)$. On sait (voir l'appendice ) que l'on peut trouver $\beta$ racine de $v_{q}(X)$ et $\gamma$ racine de $v_{r}(X)$ de telle sorte que $\Delta=0$ et $\alpha l=\beta m$. Alors $G'$ est un groupe diédral d'ordre $2n$, où $n=ppcm(p,q,r)$ et $R$ est réductible.
\end{theorem}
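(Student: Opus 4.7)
Le plan consiste à combiner les Propositions~3 et~4 avec la réductibilité de $R$ pour identifier $G'$ comme groupe diédral explicite.

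D'abord, l'hypothèse $\Delta=0$ et $\alpha l=\beta m$ équivaut, d'après l'introduction, à la réductibilité de $R$ avec une forme bilinéaire $G$-invariante non nulle: cela donne immédiatement que $R$ est réductible. Le radical de cette forme étant $G$-invariant, le quotient est un $G$-module de dimension $2$ muni d'une forme non dégénérée invariante, et $G'=G/N(G)$ (où $N(G)$ est le noyau de l'action sur ce quotient) y agit fidèlement. Les images $\overline{t_i}$ sont alors des réflexions relativement à cette forme, donc $\overline{t_1 t_2 t_3}$ est une isométrie de déterminant $-1$ en dimension $2$: c'est une réflexion, d'où $(t_1t_2t_3)^{2}=1$ dans $G'$.

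Par conséquent, $G'$ est un quotient du groupe $G''$ de la Proposition~4. L'image $H'$ du sous-groupe $H''\simeq \mathbb{Z}/d\mathbb{Z}\times\mathbb{Z}/n\mathbb{Z}$ est donc un sous-groupe abélien normal de $G'$, inversé par $\overline{t_1}$. Comme $G'$ n'est pas abélien (il admet une représentation fidèle irréductible de degré $2$), $H'$ est d'indice $2$ dans $G'$. La Proposition~3 s'applique alors à $L=G'$, $H=H'$, $\sigma=\overline{t_1}$: la fidélité et l'irréductibilité de $R$ sur le quotient de dimension $2$ forcent $H'$ à être cyclique, et $G'$ est donc un groupe diédral.

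Reste à établir que $|H'|=n$, ce qui constitue l'étape principale. D'un côté, $H'$ étant un quotient cyclique de $H''\simeq \mathbb{Z}/d\mathbb{Z}\times\mathbb{Z}/n\mathbb{Z}$, son ordre divise $n$. De l'autre, on montrerait que les images de $t_1t_2$, $t_2t_3$, $t_3t_1$ gardent leurs ordres respectifs $p$, $q$, $r$ dans $G'$: sur le quotient irréductible de dimension $2$, chacun de ces produits de deux réflexions est une rotation dont l'angle est déterminé par $\alpha$, $\beta$, $\gamma$, racines de $v_p(X)$, $v_q(X)$, $v_r(X)$, et est donc d'ordre exactement $p$, $q$, $r$. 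Par suite, $|H'|$ est multiple de $\mathrm{ppcm}(p,q,r)=n$, d'où $|H'|=n$ et $|G'|=2n$. L'obstacle principal à formaliser est l'argument de dimension $2$ dans la première étape, qui requiert d'être précis sur la décomposition induite par la forme invariante et sur la manière dont les $\overline{t_i}$ opèrent effectivement comme réflexions sur le quotient non dégénéré.
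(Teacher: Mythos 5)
Votre démonstration est correcte et suit pour l'essentiel la même route que celle du papier: établir $(t_{1}t_{2}t_{3})^{2}=1$ dans $G'$, en déduire via la Proposition~4 que $G'$ est un quotient de $G''$, puis appliquer la Proposition~3 jointe à l'action fidèle et irréductible sur le quotient de dimension $2$ pour conclure que $G'$ est diédral d'ordre $2n$. Les seules différences sont cosmétiques: vous obtenez la relation $(t_{1}t_{2}t_{3})^{2}=1$ par un argument de déterminant en dimension $2$ là où le papier cite le polynôme caractéristique $(X-1)^{2}(X+1)$ de $t_{1}$ tiré de \cite{Z1}, et vous explicitez l'encadrement de $|H'|$ (divise $n$ car quotient cyclique de $\mathbb{Z}/d\mathbb{Z}\times\mathbb{Z}/n\mathbb{Z}$, et multiple de $ppcm(p,q,r)=n$ par fidélité des $\langle s_{i},s_{j}\rangle$) que le papier laisse implicite.
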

\begin{proof}
D'après la proposition 1 de l'appendice (voir \cite{Z1}), on sait que le polynôme caractéristique de $t_{1}=s_{1}s_{2}s_{3}$ est $P_{t_{1}}=(X-1)^{2}(X+1)$, donc $t_{1}$ a comme valeurs propres sur $M'$: $-1$ et $+1$, d'où $(t'_{1})^{2}=id_{M'}$. D'après la proposition 4, $G'$ est isomorphe à un quotient du groupe $G"$ défini dans cette proposition. Il en résulte que $G'$ est une extension d'un groupe commutatif fini $H$ par un élément d'ordre $2$ qui inverse tous les éléments de $H$. De plus $G'$ opère fidèlement sur $M'$ par définition. Comme $<s_{i},s_{j}>$ $(1\leqslant i < j \leqslant 3)$ s'envoie fidèlement dans $G'$ et comme la représentation du groupe diédral $<s_{i},s_{j}>$ est absolument irréductible sur $M'$, nous voyons que $G'$ est un groupe diédral d'ordre $2n$ en appliquant la proposition 3 car toute représentation irréductible complexe de $D_{n}$ est réalisable sur $K_{0}$.
\end{proof}
\subsection{Le groupe $N(G)$}
Comme $G'$ est un groupe diédral d'ordre $2n$ et comme $N$ est un groupe commutatif libre tel que la représentation de $G'$ sur $N\otimes_{\mathbb{Z}}\mathbb{Q}$ est irréductible, nous allons étudier les représentations rationnelles irréductibles du groupe diédral $D_{n}$ d'ordre $2n$ $(n\geqslant 3)$. En particulier, nous allons voir qu'il n'y en a qu'une seule qui est fidèle.

Nous allons d'abord construire les représentations irréductibles rationnelles du groupe diédral $D_{n}$. D'après un résultat bien connu (voir \cite{S}), on a :\\
"Le nombre des classes de représentations irréductibles du groupe fini $G$ sur $\mathbb{Q}$ est égal au nombre des classes de conjugaison de sous-groupes cycliques de $G$."

Nous cherchons donc le nombre de classes de conjugaison de sous-groupes cycliques du groupe $G_{0}$
\[
G_{0}:=<g,s \,|\,g^{n}=s^{2}=1,sgs^{-1}=g^{-1}>\quad (n\geqslant 3).
\]
On pose $C:=<g>$. Il y a d'abord les classes de cardinal $1$ contenant chacune un sous-groupe de $C$: tous les sous-groupes de $C$ sont normaux dans $G_{0}$.\\
Si $n$ est impair, il y a une classe de conjugaison de sous-groupes d'ordre $2$ de $G_{0}$ et si $n$ est pair, il y a deux classes de conjugaison de sous-groupes d'ordre $2$ contenant les involutions non centrales de $G_{0}$.

Soit $\Phi_{n}(X)$ le n-ième polynôme cyclotomique . Le polynôme $\Phi_{n}(X)\in \mathbb{Z}[X]$, est irréductible et réciproque. On considère le corps $L=\mathbb{Q}[x]/(\Phi_{n}(X))$. C'est un espace vectoriel de dimension $\varphi(n)=\deg \Phi_{n}(X)$ sur le corps $\mathbb{Q}$. Le groupe de Galois $\mathcal{G}al(L/\mathbb{Q})$ est un groupe commutatif d'ordre $\varphi(n)$. Soit $\pi:\mathbb{Q}[x]\to L$ la projection canonique. On pose $g:=\pi(X)$ et l'on fait opérer $g$ sur $L$ par multiplication. Comme $n\geqslant 3$, $\varphi(n)$ est pair et le groupe de Galois contient un élément $s$ d'ordre $2$ qui opère comme $g\mapsto g^{-1}$. Le sous-corps $L_{0}$ des points fixes de $s$  est de degré $\frac{1}{2}\varphi(n)$. Il en résulte que $[L,s]$, sous-espace vectoriel de $L$ formé des éléments transformés en leurs opposés par $s$, est de dimension $\frac{1}{2}\varphi(n)$. Le sous-groupe de $GL(L)$ engendré par $g$ et $s$ est un groupe diédral d'ordre $2n$.

Nous construisons une base du $\mathbb{Q}$-espace vectoriel $L$ de la manière suivante: soit $e_{1}\in [L,s]-\{0\}$. Pour $1\leqslant i \leqslant\varphi(n)-1$, on pose $e_{i+1}=g.e_{i}$. Alors $(e_{1},e_{2},\cdots,e_{\varphi(n)})$ est une base de $L$ car $\Phi_{n}(X)$ est un polynôme irréductible.\\
Si $\Phi_{n}(X)=X^{\varphi(n)}+a_{1}X^{\varphi(n)-1}+\cdots+a_{1}X+1$, on a:
\[
g.e_{n}=-e_{1}-a_{1}e_{2}-\cdots-a_{1}e_{\varphi(n)}.
\]
De plus un calcul simple montre que $s(e_{i})=-g^{1-i}.e_{1}$ $(1\leqslant i \leqslant \varphi(n)$).\\
Il est clair que nous avons construit de cette manière une $\mathbb{Q}$-représentation irréductible et fidèle $R_{n}$ du groupe $G_{0}$. Soit maintenant $d\geqslant 3$ un diviseur de $n$ et soit $C_{d}:=<g^{n/d}>$. Alors $C_{d}\lhd G_{0}$ et $G_{0}/C_{d}$ est isomorphe à un groupe diédral d'ordre $2d$. Nous appliquons ce qui précède au groupe $G_{0}/C_{d}$ et nous obtenons ainsi une $\mathbb{Q}$-représentation irréductible du groupe $G_{0}$, de noyau $C_{d}$ et de degré $\varphi(d)$.\\
- Si $n$ est impair, alors $C_{1}$ est d'ordre $1$ et nous obtenons la représentation $R_{1}$ de degré $1$ où $s$ opère comme $-1$. Nous avons aussi dans ce cas la représentation $R_{0}$ où $s$ opère comme l'identité. D'après le résultat cité plus haut nous avons obtenu toutes les représentations rationnelles irréductibles de $G_{0}$.\\
- Si $n$ est pair, alors $C_{2}$ est d'ordre $2$ et nous obtenons la représentation $R_{2}$ de degré $\varphi(2)=1$ où $g$ et $s$ opèrent comme $-1$. Comme dans le cas impair, il y a la représentation $R_{1}$ de degré $1$ où $s$ opère comme $-1$ et $g$ comme l'identité; et enfin il y a la représentation $R_{1}\otimes R_{2}$ de degré $1$ où $s$ opère comme l'identité et $g$ opère comme $-1$.

Nous pouvons encore remarquer que toutes les représentations $R_{i}$ sont des $\mathbb{Z}$-représentations.

Toutes les représentations absolument simples de $G_{0}$ sont de degré $1$ ou $2$ et peuvent s'écrire sur le corps $K=\mathbb{Q}(\cos \frac{2\pi}{n})$. Celles qui sont fidèles sont celles pour lesquelles le caractère $\chi_{k}$ est tel que $\chi_{k}(g)=2\cos \frac{2k\pi}{n}$ avec $1\leqslant k <n$ et $pgcd(k,n)=1$. Toutes ces représentations sont conjuguées sous l'action du groupe de Galois $\mathcal{G}al(K/\mathbb{Q})$ et il y en a $\frac{1}{2}\varphi(n)$ une pour chaque valeur de $k$, mais si $k'$ est tel que $k+k'=n$, on a $\chi_{k}=\chi_{k'}$: les représentations obtenues sont équivalentes.
La représentation $R_{n}$ se décompose sur $K$ en somme de représentations irréductibles fidèles, toutes de degré $2$ et nous avons $\deg R_{n}=\varphi(n)=2(\frac{1}{2}\varphi(n))$. En particulier, nous voyons que l'indice de Schur vaut $1$ dans ce cas.
\begin{theorem}
Le groupe $N$ est un groupe commutatif libre de rang $\varphi(n)$.
\end{theorem}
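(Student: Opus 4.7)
The plan is to leverage the classification of irreducible rational representations of $D_n$ developed just above, together with a faithfulness argument. The hypothesis preceding the theorem already provides: (a) $N$ is free abelian, (b) $G' \simeq D_n$ by the previous theorem, and (c) the representation of $G'$ on $V := N\otimes_{\mathbb{Z}}\mathbb{Q}$ is irreducible. So $V$ must be one of the representations $R_d$ constructed earlier --- indexed by divisors $d$ of $n$, with kernel $C_d = \langle g^{n/d}\rangle$ and degree $\varphi(d)$ --- or one of the exceptional one-dimensional representations. The target rank $\varphi(n) = \deg R_n$ is precisely that of the unique \emph{faithful} member of this list, so the entire proof reduces to showing that $G'$ acts faithfully on $N$.

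First, I would dispose of the one-dimensional representations $R_0$, $R_1$, $R_1\otimes R_2$: each has image of order $\leqslant 2$, hence cannot be faithful on $G' \simeq D_n$ with $n \geqslant 3$. Among the higher $R_d$, the kernel is exactly $C_d$, so $R_d$ is faithful iff $d = n$.

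Next, for the faithfulness itself, I would use the short exact sequence $1 \to N \to G \to G' \to 1$ and the hypothesis that $G = \mathrm{Im}\,R$ acts faithfully on $M$. Given $\bar g \in G'$ acting trivially by conjugation on $N$, lift it to $g \in G$; then $g \in C_G(N)$. The explicit description of $N$ and of the action of $G$ on $N$ announced in the abstract identifies $N$ with a full-rank $G'$-stable lattice inside an invariant subspace of $M$, realized as a translation subgroup of the affine reflection action. In such a setting the centralizer of $N$ in $G$ equals $N$ itself, because any linear element commuting with a non-degenerate lattice of translations in an affine representation must itself be a translation. Hence $g\in N$ and $\bar g = 1$, giving faithfulness.

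With faithfulness in hand, the only $R_d$ compatible with the hypotheses on $V$ is $R_n$, so $V \simeq R_n$ as $\mathbb{Q}[G']$-modules, and therefore $\mathrm{rk}_{\mathbb{Z}}(N) = \dim_{\mathbb{Q}} V = \varphi(n)$. The main obstacle is the faithfulness step: it rests on the explicit embedding of $N$ into $G$ and the precise formula for the conjugation action, which is only alluded to in the abstract of this part. Once that explicit description is invoked, the centralizer computation is routine and the representation-theoretic classification already proved finishes the argument.
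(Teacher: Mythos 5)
Your proposal is correct and follows essentially the same route as the paper: the proof there simply invokes the classification of irreducible rational representations of $D_{n}$ established just before, observing that $G'$ acts faithfully and irreducibly on $N\otimes_{\mathbb{Z}}\mathbb{Q}$, so the representation must be the unique faithful one $R_{n}$, of degree $\varphi(n)$. The only difference is that the paper takes the faithfulness and irreducibility of this action as given from the framework of the earlier parts of the series, whereas you re-derive faithfulness via a centralizer-of-translations argument.
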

\begin{proof}
C'est clair d'après ce qui précède. En effet d'abord $N$ est de type fini et sans torsion, donc c'est un groupe commutatif libre. Ensuite le groupe $G'$ (isomorphe au groupe diédral $D_{n}$ d'ordre $2n$) opère d'une manière fidèle et irréductible sur $N\otimes_{\mathbb{Z}}\mathbb{Q}$. D'après ce que nous savons sur la représentation $R_{n}$, nous avons le résultat.
\end{proof}
\subsection{Structure du groupe $G$}
Nous reprenons les notations précédentes. nous avons construit la base $\mathcal{E}=(e_{1},\cdots,e_{\varphi(n)})$ du $\mathbb{Q}$-espace vectoriel $L$, les éléments $g$ et $s$ opérant de la manière suivante: 
\[
g.e_{i}=e_{i+1}, (1\leqslant i <\, \varphi(n)),\,g.e_{\varphi(n)}=-e_{1}-a_{1}e_{2}-\cdots -a_{1}e_{\varphi(n)},
\]
\[
 s.e_{i}=-g^{1-i}.e_{1} (1\leqslant i <\, \varphi(n)).
\]
Nous pouvons considérer le réseau $\Lambda$ de $L$ engendré par $\mathcal{E}$. Les formules précédentes montrent que $G_{0}$ stabilise $\Lambda$. Nous formons maintenant le produit semi-direct: $G_{1}:=\Lambda \rtimes G_{0}$. Nous avons alors:
\begin{theorem}
Le groupe $G$ ($=G(\alpha,\beta,\gamma;l)$) est isomorphe à $G_{1}$.
\end{theorem}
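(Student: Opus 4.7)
The plan is to establish $G \cong G_1 = \Lambda \rtimes G_0$ in two stages, combining the structural results already obtained (Theorems 1 and 2) with the explicit description of the $\mathbb{Q}$-representation of $D_n$ constructed just above.

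First, I would identify the $\mathbb{Z}[G_0]$-module $N$ with $\Lambda$. By Theorem 2, $N$ is a free abelian group of rank $\varphi(n)$, and by construction the action of $G'$ on $N \otimes_\mathbb{Z} \mathbb{Q}$ is faithful and irreducible over $\mathbb{Q}$; it is therefore isomorphic to $L$, the unique such rational representation of $D_n$. Thus $N$ is realized as a full-rank $G_0$-stable lattice in $L$. To match it with the specific lattice $\Lambda$ spanned by $\mathcal{E}=(e_1,\ldots,e_{\varphi(n)})$, I would exhibit $\eta\in N$ whose image in $L$ belongs to the $(-1)$-eigenspace $[L,s]$ of a chosen involution of $G_0$, and verify that $(\eta, g\eta,\ldots,g^{\varphi(n)-1}\eta)$ is a $\mathbb{Z}$-basis of $N$, mirroring exactly the construction of $\mathcal{E}$. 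This yields a $G_0$-equivariant identification $N \cong \Lambda$.

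The main work lies in showing that the extension
\[
1 \longrightarrow N \longrightarrow G \longrightarrow G' \longrightarrow 1
\]
splits. The strategy is to produce inside $G$ a finite subgroup projecting isomorphically onto $G'\cong D_n$. Since $R$ is a reducible reflection representation with invariant subspace $M'$ on which $G$ acts through $G'$, we may view $G$ as sitting inside an affine group on $M'$: $N$ acts as translations, and $G'$ is the linear part. The generators $s_1,s_2,s_3$ are then affine reflections whose fixing hyperplanes cannot have a common point (otherwise the subgroup they generate, which is all of $G$, would be finite). By conjugating each $s_i$ by an appropriate translation, one seeks involutions $\sigma_i$ sharing a common fixed point; the subgroup $\langle\sigma_1,\sigma_2,\sigma_3\rangle$ is then finite, is conjugate to a linear reflection group, and projects isomorphically onto $G'$, giving the desired splitting. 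The dihedral subgroups $\langle s_i,s_j\rangle$ already embed faithfully in $G'$ (as used in the proof of Theorem 1), so checking the defining relations of $D_n$ on the $\sigma_i$ amounts to a finite verification.

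Once these two steps are in place, the isomorphism $G \cong N\rtimes G' \cong \Lambda \rtimes G_0 = G_1$ follows immediately. The principal obstacle is the integrality of the splitting: one must ensure that the translations conjugating the $s_i$ lie in $N$ itself and not merely in $N\otimes_\mathbb{Z}\mathbb{Q}$; equivalently, the class of the extension in $H^2(G',N)$ must vanish. Since $N\otimes\mathbb{Q}\cong L$ is a $\mathbb{Q}$-representation of a finite group, the extension is automatically trivial over $\mathbb{Q}$, so the obstruction is torsion and should be controlled by the explicit $\mathbb{Z}[D_n]$-structure of $\Lambda$ together with the integrality properties of the reflection representation $R$.
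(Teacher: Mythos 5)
Your proposal runs in the opposite direction from the paper and, as it stands, has two genuine gaps, one in each of your two stages. The paper does not try to split the extension $1\to N\to G\to G'\to 1$ from inside $G$; it constructs $G_{1}=\Lambda\rtimes G_{0}$ first, exhibits three explicit involutions $s_{1}=s$, $s_{2}=sg^{u}$, $s_{3}=sg^{v}e$ (with $e$ a lattice vector) whose pairwise products have orders $p,q,r$ (this uses condition (C)/(D) and the appendix), proves by an arithmetic argument ($q_{1}e, r_{1}e\in G_{2}$ and $pgcd(q_{1},r_{1})=1$, hence $e\in G_{2}$) that these three involutions generate all of $G_{1}$, and concludes that $G_{1}$ is a quotient of $G$; since every proper quotient of $G$ is finite and $G_{1}$ is infinite, $G\simeq G_{1}$. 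This route makes both of your problematic steps unnecessary.

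The first gap is your identification $N\cong\Lambda$. Knowing that $N$ is a full-rank $G_{0}$-stable lattice in $L\cong\mathbb{Q}[X]/(\Phi_{n})$ does not determine its $\mathbb{Z}[G_{0}]$-isomorphism class: as a $\mathbb{Z}[g]$-module such a lattice is a fractional ideal of $\mathbb{Z}[\zeta_{n}]$, and the existence of a vector $\eta$ with $(\eta,g\eta,\dots,g^{\varphi(n)-1}\eta)$ a $\mathbb{Z}$-basis is precisely the statement that this ideal class is principal, which fails for general lattices and for general $n$ (the class number of $\mathbb{Z}[\zeta_{n}]$ is not $1$). You would need to extract a cyclic generator of $N$ from the specific way $N$ is generated inside $G$, which is essentially the computation the paper does with $e$, $g^{p_{1}}(e)-e$, etc. The second gap is the splitting argument. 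Your affine picture on $M'$ does not exist in the form you describe: $N$ has rank $\varphi(n)$ while $\dim M'=2$, so outside the crystallographic cases $n\in\{3,4,6\}$ the group $N$ sits as a \emph{dense} subgroup of the $2$-dimensional translation space, the reflections have no discrete arrangement of fixing hyperplanes, and the system of equations "conjugate each $s_{i}$ by a translation in $N$ to get a common fixed point" need not be solvable in $N$ (only in $N\otimes\mathbb{Q}$). Your fallback remark that the class in $H^{2}(G',N)$ is torsion only shows it is killed by $|G'|=2n$, not that it vanishes; the splitting is exactly what has to be proved, and the paper obtains it for free by building $G_{1}$ as a semidirect product and surjecting $G$ onto it.
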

\begin{proof}
Soit $f'$ un diviseur propre de $n$: $n=ff'$. Si $\zeta$ est une racine primitive n-ième de l'unité, on a $\zeta^{n}=1$ donc 
\[
\zeta^{ff'}-1=0=(\zeta^{f'})^{f}-1=(\zeta^{f'}-1)((\zeta^{f'})^{f-1}+\cdots+\zeta^{f'}+1).
\]
Comme $\zeta^{f'}\neq 1$, $\zeta^{f'}$ est racine du polynôme $X^{f-1}+\cdots+X+1$.
Soit $h$ un élément d'ordre $f$ de $C$. D'après la remarque précédente, $\forall e \in L$ on a $(h^{f-1}+h^{f-2}+\cdots+h+1)(e)=0$. Il en résulte que dans le produit semi-direct $\Lambda \rtimes G_{0}$, si $h$ est un élément d'ordre $f$ de $G_{1}$, alors $\forall e \in \Lambda$, l'élément $he$ de $\Lambda \rtimes G_{0}$ est d'ordre $f$.

On suppose que le triple d'entiers $(p,q,r)$ satisfait à la condition (C). On utilise les notations 1. De plus si $n$ est pair, on suppose (condition $(C_{1}))$ que 
$v_{2}(n)=v_{2}(p)=v_{2}(r)$ et $v_{2}(q) < v_{2}(n)$. D'après l'appendice, on peut trouver trois involutions non centrales dans $G_{0}$, $s_{1}$, $s_{2}$ et $s_{3}$ telles que $s_{1}s_{2}=g_{3}$ est d'ordre $r$, $s_{2}s_{3}=g_{1}$ est d'ordre $p$ et $s_{3}s_{1}=g_{2}$ est d'ordre $q$.

On va montrer que $G_{1}$ est engendré par trois involutions. Nous gardons les notations précédentes. Pour $s_{1}$, nous prenons le $s$ servant à définir $G_{0}$; pour $s_{2}$, nous prenons $s_{1}g^{u}$ avec $g^{u}$ d'ordre $p$ et pour $s_{3}$, nous prenons $s_{1}g^{v}e$ avec $g^{v}$ d'ordre $q$ et $e$ est le transformé par $g$ d'un vecteur de la base $\mathcal{E}$ de $L$.

Nous avons $s_{1}s_{2}=g^{u}$, donc $u=p_{1}k_{1}$ avec $pgcd(k_{1},p)=1$ car $g^{u}$ est d'ordre $p$. De même, $g^{v}$ est d'ordre $q$, $v=q_{1}k_{2}$ avec $pgcd(k_{2},q)=1$ et enfin $g^{v-u}$ est d'ordre $r$, $v-u=r_{1}k_{3}$ avec $pgcd(k_{3},r)=1$. L'existence de $k_{1}$, $k_{2}$ et $k_{3}$ est assurée puisque le triple d'entiers $(p,q,r)$ satisfait à la condition (C), donc aussi à la condition (D).

Appelons $G_{2}$ le sous-groupe de $G_{1}$ engendré par $s_{1}$, $s_{2}$ et $s_{3}$. Alors $G_{2}$ contient $s_{1}$, $g^{u}$ et $g^{v}e$. En particulier, comme $g^{u}$ et $g^{p_{1}}$ ont le même ordre $p$, $g^{p_{1}}$ est une puissance de $g^{u}$ et donc $g^{p_{1}}$ est dans $G_{2}$. Nous montrons que $e\in G_{2}$. Nous avons $g^{-p_{1}}g^{v}e=g^{v-p_{1}}e\in G_{2}$ et $g^{v}eg^{-p_{1}}=g^{v-p_{1}}g^{p_{1}}(e)\in G_{2}$. Il en résulte aussitôt que $g^{p_{1}}(e)-e\in G_{2}$. Une récurrence facile montre que $\forall k \in \mathbb{N}, g^{p_{1}k}(e)-e \in G_{2}$.\\
En effet, le résultat est vrai pour $k=1$. Ensuite si $k\geqslant 2$ et si le résultat est vrai pour $k-1$, alors $g^{p_{1}(k-1)}(e)-e \in G_{2}$, donc $g^{p_{1}}(g^{p_{1}(k-1)}(e)-e)g^{-p_{1}}=g^{p_{1}k}(e)-g^{p_{1}}(e) \in G_{2}$ d'où le résultat.\\
En particulier, comme $q=p_{1}r_{1}d$ et $r=p_{1}q_{1}d$, nous voyons que $g^{q}(e)-e \in G_{2}$ et $g^{r}(e)-e \in G_{2}$.\\
D'après la remarque initiale, nous savons que $g^{q}$ est racine du polynôme $X^{q_{1}-1}+X^{q_{1}-2}+\cdots+X+1$ et $g^{r}$ est racine du polynôme $X^{r_{1}-1}+X^{r_{1}-2}+\cdots+X+1$ dans l'anneau $\mathcal{L}(\Lambda)$. Il en résulte que $((g^{q})^{q_{1}-1}+\cdots+g^{q}+id_{L})(e)=0$ donc nous obtenons $q_{1}e \in G_{2}$ et de même $r_{1}e \in G_{2}$. Comme $pgcd(q_{1},r_{1})=1$, nous avons le résultat: $e \in G_{2}$. Nous en déduisons que $g^{u}$ et $g^{v}$ sont dans $G_{2}$, donc comme $<g>=<g^{u},g^{v}>$ nous voyons que $g \in G_{2}$.

Comme l'ensemble des transformés de $e$ par $<g>$ contient une base de $\Lambda$, nous avons le résultat $G_{2}\simeq G_{1}$. Il en résulte aussitôt que $G_{1}$ est isomorphe à un quotient du groupe $G$. Comme tous les quotients propres de $G$ sont finis, nous obtenons $G_{1}\simeq G$.
\end{proof}
\subsection{Appendice}
Dans cet appendice, nous démontrons un résultat d'arithmétique (voir \cite{St}) utilisé pour caractériser certains triples d'entiers qui donnent les groupes diédraux.
\begin{proposition}\label{propC1}
Les deux conditions suivantes $(C)$ et $(D)$ sur le triple d'entiers non nuls $(a_{1},a_{2},a_{3})$ sont équivalentes:
\begin{align*}
(C)
\begin{cases}
(C_{1}) & n=ppcm(a_{1},a_{2},a_{3})=ppcm(a_{i},a_{j}) (1\leqslant i\neq j \leqslant 3);\\
(C_{2}) & \parbox{11 cm}{%
$\exists i,j \in \mathbb{N}$ tels que $(1\leqslant i\neq j \leqslant 3)$ et $v_{2}(a_{i})=v_{2}(a_{j})=v_{2}(n)$;
si $|\{i,j,k\}|=3$, $v_{2}(a_{k})<v_{2}(n)$.}
\end{cases}\\
(D)
\begin{cases}
\exists c_{i}  \in \mathbb{Z} (1\leqslant i \leqslant 3) & \text{tels que}\\
(D_{1}) & \sum_{i=1}^{i=3}\frac{c_{i}}{a_{i}}\in \mathbb{Z};\\
(D_{2}) & pgcd(c_{i},a_{i})=1 (1\leqslant i \leqslant 3).
\end{cases}
\end{align*}
(Si $p$ est un nombre premier et $a$ un entier, $v_{p}(a)$ désigne l'exposant de la plus grande puissance de $p$ qui divise $a$.)
\end{proposition}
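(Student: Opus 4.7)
The plan is a prime-by-prime analysis via the Chinese Remainder Theorem, since both $(C)$ and $(D)$ are conjunctions of conditions at each prime dividing $n := \mathrm{ppcm}(a_1,a_2,a_3)$. Writing $b_i := n/a_i$, condition $(D_1)$ becomes $n \mid c_1 b_1 + c_2 b_2 + c_3 b_3$, which is equivalent to $p^{v_p(n)} \mid \sum_i c_i b_i$ for every prime $p$. For a fixed prime $p$, I set $e_i := v_p(a_i)$, $e := v_p(n) = \max_i e_i$, $S := \{i : e_i = e\}$, and factor $b_i = p^{e-e_i} m_i$ with $p \nmid m_i$. The coprimality condition $(D_2)$ reduces at $p$ to: $p \nmid c_i$ whenever $e_i \ge 1$.

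The argument then splits on $|S|$. If $|S| = 1$, say $S = \{s\}$, then $v_p(c_s b_s) = 0$ while $v_p(c_i b_i) \ge 1$ for $i \ne s$, so $v_p(\sum_i c_i b_i) = 0 < e$ and no choice of $c_i$ works; this is exactly the failure of $(C_1)$ at $p$. If $|S| = 3$ with $p = 2$ and $e \ge 1$, then $\sum_i c_i b_i \equiv c_1 m_1 + c_2 m_2 + c_3 m_3 \pmod 2$ is a sum of three odd numbers, hence odd; this is the failure of $(C_2)$. In the remaining cases ($|S| = 2$, or $|S| = 3$ with $p$ odd), I construct a local solution: distinguish an index $i_0 \in S$, fix $c_j$ coprime to $p$ for the other $j \in S$ and any admissible $c_k$ for $k \notin S$, then solve uniquely for $c_{i_0} \pmod{p^{e}}$ via
\[
c_{i_0} \equiv -\,m_{i_0}^{-1} \sum_{j \ne i_0} c_j\, m_j\, p^{e - e_j} \pmod{p^{e}},
\]
and verify that this $c_{i_0}$ is coprime to $p$.

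Finally, I would glue the local constructions using the Chinese Remainder Theorem to obtain global $c_i$ satisfying $(D_1)$ and $(D_2)$. The equivalence then assembles: $(D)$ holds iff every local condition is solvable, iff $|S_p| \ge 2$ for every prime $p$ (which is precisely $(C_1)$) together with $|S_2| \le 2$ whenever $v_2(n) \ge 1$ (which is precisely $(C_2)$).

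The main technical obstacle will be the verification in the case $|S| = 3$ with $p$ odd that the free choices of the $c_j$ can always be made so that $\sum_{j \ne i_0} c_j m_j \not\equiv 0 \pmod{p}$; otherwise the forced $c_{i_0}$ would be divisible by $p$, violating $(D_2)$. Since this non-vanishing amounts to excluding one forbidden ratio among units modulo $p$, it is achievable iff $p \ge 3$, giving a conceptual explanation of why only the prime $2$ obstructs and hence why $(C_2)$ singles out $p = 2$.
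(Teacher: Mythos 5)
Your proof is correct, but it follows a genuinely different route from the paper's. For the hard direction $(C)\Rightarrow(D)$, the paper fixes $c_{1}$ with $pgcd(c_{1},a_{1})=1$, reduces to the linear Diophantine equation $b_{2}x+b_{3}y=b_{1}(b_{2}b_{3}w-c_{1})$, parametrizes its solutions by $\rho\in\mathbb{Z}$, and then runs a global sieve: it shows by inclusion--exclusion that the set $T$ of bad residues $\rho$ modulo $b_{1}w$ has cardinality $b_{1}w\bigl(1-\prod_{p\in P_{1}}\frac{p-1}{p}\prod_{s\in P_{0}}\frac{s-2}{s}\bigr)<b_{1}w$, the strict inequality resting on $2\notin P_{0}$, i.e.\ on $(C_{2})$. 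Your argument instead localizes at each prime via the Chinese Remainder Theorem, exploiting the (correct) observation that $p^{v_{p}(n)}\mid\sum_{i}c_{i}b_{i}$ depends only on $c_{i}\bmod p^{v_{p}(a_{i})}$, and then solves the congruence explicitly in each case $|S|=2$ or $|S|=3$; your case analysis also delivers $(D)\Rightarrow(C)$ by contraposition ($|S|=1$ kills $(C_{1})$, $|S|=3$ at $p=2$ kills $(C_{2})$), whereas the paper proves that direction separately with a reduced-fraction argument and a parity argument. Your approach is cleaner and more conceptual --- it isolates exactly why $2$ is the only obstructing prime (one cannot avoid a single forbidden ratio among the units of $\mathbb{F}_{2}$) and generalizes immediately to more than three denominators; the paper's sieve is heavier but buys a quantitative by-product, the exact count $\varphi(n)w\prod_{s\in P_{0}}\frac{s-2}{s}$ of reduced solutions of $\sum b_{i}c_{i}=n$, which is then used in the proposition that follows. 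One small point: you tacitly read $(C_{2})$ as imposing $|S_{2}|\leqslant 2$ only when $v_{2}(n)\geqslant 1$ (as literally written it is vacuously violated for $n$ odd); this is the intended reading, consistent with the paper's own proof, so it is not a gap.
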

\begin{proof}
1) Nous montrons d'abord que $(D)\Longrightarrow (C)$.\\
Nous pouvons écrire $\frac{c_{1}}{a_{1}}+\frac{c_{2}}{a_{2}}=m-\frac{c_{3}}{a_{3}}$ où $m$ est un entier. Comme $\frac{c_{3}}{a_{3}}$ est une fraction réduite, $a_{3}| ppcm(a_{1},a_{2})$, donc $ppcm(a_{1},a_{2},a_{3})=ppcm(a_{1},a_{2})=ppcm(a_{i},a_{j})$\\ $(i\neq j)$ par symétrie et la condition $(C_{1})$ est satisfaite.

Nous pouvons écrire $a_{1}=\epsilon_{1}p_{1}^{\alpha_{1}}\ldots p_{r}^{\alpha_{r}}$, $a_{2}=\epsilon_{2}p_{1}^{\beta_{1}}\ldots p_{r}^{\beta_{r}}$ et $a_{3}=\epsilon_{3}p_{1}^{\gamma_{1}}\ldots p_{r}^{\gamma_{r}}$ où chaque $p_{i}$ est un nombre premier, $\alpha_{i},\beta_{i},\gamma_{i}$ sont des entiers $\geqslant 0$ et $\epsilon_{i}\in \{-1,+1\}$ $(1\leqslant i \leqslant 3 $). nous avons $ppcm(a_{1},a_{2})=p_{1}^{sup(\alpha_{1},\beta_{1})}\ldots p_{r}^{sup(\alpha_{r},\beta_{r})}$ et de même pour $ppcm(a_{1},a_{3})$ et $ppcm(a_{2},a_{3})$. Il en résulte que pour $1 \leqslant i \leqslant 3$ nous avons $sup(\alpha_{i},\beta_{i})$ = $sup(\alpha_{i},\gamma_{i})$ = $sup(\beta_{i},\gamma_{i})$, donc nous avons nécessairement: deux des entiers $\alpha_{i},\beta_{i},\gamma_{i}$ sont égaux et le troisième est plus petit. En particulier, il existe $i$ et $j$ $(1 \leqslant i \neq j \leqslant 3)$ tels que $v_{2}(a_{i})=v_{2}(a_{j})=v_{2}(n)$ et si $|\{i,j,k\}|=3$, $v_{2}(a_{k})\leqslant v_{2}(n)$. Supposons que $v_{2}(n)\geqslant 1$ et $v_{2}(a_{1})=v_{2}(a_{2})=v_{2}(a_{3})=v_{2}(n)$. D'après la condition $(D_{2})$ chaque $c_{i}$ est impair et si nous définissons $b_{i}$ par $n=a_{i}b_{i}$ $(1 \leqslant i \leqslant 3)$, alors chaque $b_{i}$ est impair d'où $\frac{c_{1}}{a_{1}}+\frac{c_{2}}{a_{2}}+\frac{c_{3}}{a_{3}}=\frac{1}{n}\sum_{i=1}^{i=3}(b_{i}c_{i})$. Dans ces conditions $n$ est pair et $\sum_{i=1}^{i=3}(b_{i}c_{i})$ est impair, donc $\frac{c_{1}}{a_{1}}+\frac{c_{2}}{a_{2}}+\frac{c_{3}}{a_{3}}$ ne peut pas être entier. la condition $(C_{2})$ est satisfaite.

2) Nous montrons maintenant que $(D)\Longrightarrow (C)$. la démonstration de trois lemmes sera faite ultérieurement. Nous procédons par étapes.\\
\textbf{Etape 1.}\\
\begin{notation}
Comme ci-dessus nous définissons $b_{i}$ par $n=a_{i}b_{i}$ $(1\leqslant i \leqslant 3)$ et l'hypothèse faite implique que $pgcd(b_{i},b_{j})=1$ si $1 \leqslant i \neq j \leqslant 3$.
\end{notation}

Posons $w := pgcd(a_{1},a_{2},a_{3})$. Alors nous avons:
\[
n=b_{1}b_{2}b_{3},a_{1}=b_{2}b_{3}w,a_{2}=b_{3}b_{1}w,a_{3}=b_{1}b_{2}w.
\]

La condition $(D_{1})$ peut s'écrire $\sum_{i=1}^{i=3}\frac{b_{i}c_{i}}{n} \in \mathbb{Z}$ et $(D_{1})$ sera satisfaite si nous supposons que:
\[
\sum_{i=1}^{i=3}b_{i}c_{i}=n.
\]

Comme $pgcd(b_{i},b_{j})=1$ si $i\neq j$, l'idéal de $\mathbb{Z}$ engendré par $b_{1},b_{2},b_{3}$ est $\mathbb{Z}$ tout entier et il existe des entiers $c_{1},c_{2},c_{3}$ tels que:
\[
b_{1}c_{1}+b_{2}c_{2}+b_{3}c_{3}=n.
\]
 Nous choisissons $c_{1}$ de telle sorte que $1\leqslant c_{1} < a_{1}$ et $pgcd(c_{1},a_{1})=1$. L'équation précédente devient:
 \[
 b_{2}c_{2}+b_{3}c_{3}=n-b_{1}c_{1}=b_{1}(b_{2}b_{3}w-c_{1}).
\]
 On est ainsi amené à chercher les solutions de l'équation $(S)$:
 \[
 b_{2}x+b_{3}y=b_{1}(b_{2}b_{3}w-c_{1})
 \]
 \[
  pgcd(x,a_{2})=pgcd(y,a_{3})=1.
 \]
 
 Nous avons $pgcd(b_{2},c_{1})=pgcd(b_{3},c_{1})=1$. Il en résulte aussitôt que, puisque $pgcd(b_{i},b_{j})=1$ si $i\neq j$: $pgcd(x,b_{3})=pgcd(y,b_{2})=1$.\\
 Pour la même raison que ci-dessus nous avons \emph{l'égalité fondamentale}:
 \[
 pgcd (x,b_{1})=pgcd (y,b_{1})=1.
\]
Soit $(x_{0},y_{0})$ une solution particulière de $(S)$. Alors toutes les solutions de $(S)$ sont données par:
\[
x=x_{0}+\rho b_{3},y=y_{0}-\rho b_{2}\quad (\rho \in \mathbb{Z}).
\]

Comme $a_{2}=b_{1}b_{3}w$ (resp. $a_{3}=b_{1}b_{2}w$ ), $ pgcd(x,a_{2})=1$ équivaut à\\ $pgcd(x,b_{1}w)=1$ (resp. $pgcd(y,a_{3})=1$ équivaut à $pgcd(y,b_{1}w)=1$).

\textbf{Etape 2.}\emph{On change la nature du problème.}\\
\begin{notation}
Si $m$ est un entier, on pose :
\[
P(m):=\{p | p \; \text{nombre premier} ,p|m\}
\]
\end{notation}
Nous avons alors: $pgcd(x,b_{1}w)=1$ si et seulement si $\forall p \in P(b_{1}w)$, $pgcd(x,p)=1$.
Ceci nous amène à: Chercher les valeurs de $\rho$ dans $\mathbb{Z}$ pour lesquelles
\[
(F)\begin{cases}
\forall p\in P(b_{1}w)-P(b_{3}), & p\nmid x=x_{0}+\rho b_{3};\\
\forall q\in P(b_{1}w)-P(b_{2}), & q\nmid y=y_{0}-\rho b_{2}
\end{cases}
\]
la condition $(F)$ est satisfaite.\\
Posons $R:=\{ \rho |\: \rho \in \mathbb{Z},\: (F) \: \text{est satisfaite}\}$. Nous voulons montrer que $R\neq \emptyset$, car si $\rho \in R$ nous aurons $pgcd(x,a_{2})=1$ et $pgcd(y,a_{3})=1$. Pour cela, nous considérons l'ensemble $S:=\mathbb{Z}-R$. Posons:
\[
S_{x}:=\{\rho |\: \rho \in \mathbb{Z},\: \exists p\in P(b_{1}w)-P(b_{3}),p|x_{0}+\rho b_{3}\}
\]
\[
S_{y}:=\{\rho |\: \rho \in \mathbb{Z},\: \exists q\in P(b_{1}w)-P(b_{2}),q|y_{0}-\rho b_{2}\}.
\]
Nous posons aussi:
\[
P_{1}:=P(b_{1})\sqcup (P(b_{2})\cap P(w))\sqcup (P(b_{3})\cap P(w))
\]
d'où $P(b_{1}w)=P_{1}\cup P_{0}$ avec $P_{0}:=\{s_{1},\ldots,s_{\delta}\}$ (remarquons que $\delta$ peut être $0$, auquel cas on a $P_{0}=\emptyset$). Si $p \in P(b_{1}w)-P(b_{3})$, nous posons
\[
S_{x}(p):=\{\rho |\rho \in \mathbb{Z},p|x_{0}+\rho b_{3}\}
\]
et si $q \in P(b_{1}w)-P(b_{2})$, nous posons
\[
S_{y}(q):=\{\rho |\rho \in \mathbb{Z},p|y_{0}-\rho b_{2}\}
\]
de telle sorte que $S_{x}=\cup_{p \in P(b_{1}w)-P(b_{3})}S_{x}(p)$ et $S_{y}=\cup_{q \in P(b_{1}w)-P(b_{2})}S_{y}(q)$. Enfin nous avons $S=S_{x}\cup S_{y}$.

\textbf{Etape 3.}\emph{Les ensembles $S_{x}(p)$ et $S_{y}(q)$ sont des classes à gauche de $\mathbb{Z}   \pmod {b_{1}w}$.}

D'après le lemme \ref{lemmaC4}  nous avons : $\forall p \in P(b_{1}w)-P(b_{3})$, $S_{x}(p)=\{\rho_{0}(x,p)+\varphi p| \varphi \in \mathbb{Z}\}$ où $0\leqslant \rho_{0}(x,p) < p$, ce que nous pouvons écrire:
\[
S_{x}(p)=\{\rho_{0}(x,p)+p\varphi_{x}(p)+b_{1}w\varphi' | 0\leqslant \varphi_{x}(p)<b_{1}wp^{-1},\varphi' \in \mathbb{Z}\}.
\]
De même $\forall q \in P(b_{1}w)-P(b_{2})$, $S_{x}(q)=\{\rho_{0}(y,q)+ \theta q| \theta \in \mathbb{Z}\}$ où $0\leqslant \rho_{0}(y,q) < p$, ce que nous pouvons écrire:
\[
S_{y}(q)=\{\rho_{0}(y,q)+q\theta_{y}(q)+b_{1}w\theta' | 0\leqslant \theta_{y}(q)<b_{1}wq^{-1},\theta'\in \mathbb{Z}\}.
\]
Il en résulte que $S_{x}(p)$ est la réunion de $b_{1}wp^{-1}$ classes à gauche de $\mathbb{Z}\pmod {b_{1}w}$ et $S_{y}(q)$ est la réunion de $b_{1}wq^{-1}$ classes à gauche de $\mathbb{Z}\pmod {b_{1}w}$.

Remarquons que si $p\in P(b_{1})$ alors $S_{x}(p)=S_{y}(p)$; de plus, si $s_{i}\in P_{0}$, $S_{x}(s_{i})$ et $S_{y}(s_{i})$ sont deux classes à gauche distinctes de $\mathbb{Z}\pmod {b_{1}w}$: nous avons $S_{x}(s_{i})\cap S_{y}(s_{i})=\emptyset$.

\textbf{Etape 4.} On pose si $p\in P(b_{1}w)-P(b_{3})$ et si $q\in P(b_{1}w)-P(b_{2})$:
\[
T_{x}(p):=\{\rho_{0}(x,p)+p\varphi_{x}(p)|0\leqslant\varphi_{x}(p)<b_{1}wp^{-1}\}
\]
\[
T_{y}(q):=\{\rho_{0}(y,q)+q\theta_{y}(q)|0\leqslant\theta_{y}(q)<b_{1}wq^{-1}\}
\]
Enfin on pose\\ $T_{x}:=\bigcup_{p\in P(b_{1}w)-P(b_{3})}T_{x}(p)$, $T_{y}:=\bigcup_{q\in P(b_{1}w)-P(b_{2})}T_{y}(q)$ et $T:=T_{x}\bigcup T_{y}$.\\
Pour montrer que $S\neq\mathbb{Z}$, il suffit de montrer que $|T|<b_{1}w$.

\textbf{Etape 5.}\emph{Calcul de $|T|$}.\\
\emph{On a}:
\[
|T|=b_{1}w(1-\prod_{p\in P_{1}}(\frac{p-1}{p})\prod_{s\in P_{0}}(\frac{s-2}{s})).
\]
\emph{En particulier} $|T|<b_{1}w$.
\begin{proof}
La condition $C_{2}$ est équivalente au fait que $2\notin P_{0}$, donc la formule à démontrer implique que 
$|T|<b_{1}w$.

Nous posons:
\[
V:=\bigcup_{p\in P_{1}}(T_{x}(p)\bigcup T_{y}(p)).
\]
Si $p\in P(b_{1})$, alors $T_{x}(p)=T_{y}(p)$; si $p\in P(b_{1}w)-(P(b_{1})\bigcup P(b_{2}))$ alors $T_{x}(p)=\emptyset$ et il n'y a que $T_{y}(p)$ à considérer; si $p\in P(b_{1}w)-(P(b_{1})\bigcup P(b_{3}))$ alors $T_{y}(p)=\emptyset$ et il n'y a que $T_{x}(p)$ à considérer. En utilisant les lemmes \ref{lemmaC5} et \ref{lemmaC6} nous obtenons, en ordonnant $P_{1}$ par ordre croissant:
\[
|V|=b_{1}w(\sum_{p\in P_{1}}\frac{1}{p}-\sum_{p_{1}<p_{2}}\frac{1}{p_{1}p_{2}}+\sum_{p_{1}<p_{2}<p_{3}}\frac{1}{p_{1}p_{2}p_{3}}-\ldots)
\]
ce qui peut s'écrire aussi:
\[
|V|=b_{1}w(1-\prod_{p\in P_{1}}(\frac{p-1}{p})).
\]
Nous posons:
\[
A:=\prod_{p\in P_{1}}(\frac{p-1}{p}).
\]
Nous nous intéressons maintenant aux ensembles $T_{x}(s_{i})$ et $T_{y}(s_{i})$ $(1\leqslant i \leqslant\delta)$.\\
Nous avons d'abord $T_{x}(s_{i})\bigcap  T_{y}(s_{i})=\emptyset$ $(1\leqslant i \leqslant\delta)$ et ensuite $|T_{x}(s_{i})|=|T_{y}(s_{i})|=\frac{b_{1}w}{s_{i}}$. Soit $s\in P_{0}$. Nous allons considérer $T_{x}(s)$ puis $T_{x}(s)\cap V$ (resp. $T_{y}(s)$ puis $T_{y}(s)\cap V$). En utilisant les lemmes \ref{lemmaC4} et \ref{lemmaC5} nous trouvons $|T_{x}(s)|=\frac{b_{1}}{s}$ et si $p\in P_{1}$, $|T_{x}(s)\cap T_{x}(p)|=\frac{b_{1}w}{sp}$. Finalement, en tenant compte des diverses intersections, nous obtenons:
\[
|T_{x}(s)\cap V|=\frac{b_{1}w}{s}(\sum_{p\in P_{1}}\frac{1}{p}-\sum_{p_{1}<p_{2}}\frac{1}{p_{1}p_{2}}+\sum_{p_{1}<p_{2}<p_{3}}\frac{1}{p_{1}p_{2}p_{3}}-\ldots)=\frac{b_{1}w}{s}(1-A).
\]
Nous avons ainsi $\frac{2b_{1}w}{s}(1-A)$ correspondant à $T_{x}(s)$ et $T_{y}(s)$. Lorsqu'il n'y a qu'un seul $s \in P_{0}$ nous avons la contribution:
\[
\frac{2b_{1}w}{s}(1-\sum_{p\in P_{1}}\frac{1}{p}+\sum_{p_{1}<p_{2}}\frac{1}{p_{1}p_{2}}-\ldots)=\frac{2b_{1}w}{s}A
\]

Soient maintenant $s$ et $s'$ deux éléments distincts de $P_{0}$. Nous avons alors quatre intersections qui ont le même cardinal:
\[
|T_{x}(s)\cap T_{x}(s')|=\frac{b_{1}w}{ss'}=|T_{x}(s)\cap T_{y}(s')|=|T_{y}(s)\cap T_{x}(s')|=|T_{y}(s)\cap T_{y}(s')|
\] 
donc nous devons enlever $4\frac{b_{1}w}{ss'}A$.\\
Un raisonnement semblable montre que si $s_{i_{1}},s_{i_{2}},\ldots,s_{i_{m}}$ sont $m$ éléments distincts de $P_{0}$, alors nous avons $2^m$ intersections qui ont le même cardinal: il faut préciser le sous-ensemble $I$ de $\{i_{1},i_{2},\ldots,i_{m}\}$ pour lequel on considère $T_{x}(s_{i})$ $i\in I$ et pour $j\in \{i_{1},i_{2},\ldots,i_{m}\}-I$, pour lequel on considère $T_{y}(s_{j})$. De la même manière que ci-dessus, nous obtenons $\frac{2^{m}b_{1}wA}{s_{i_{1}}s_{i_{2}}\ldots s_{i_{m}}}$.\\
finalement nous obtenons pour les ensembles qui contiennent un élément de $P_{0}$:
\begin{align*}
&& 2b_{1}w(\sum_{s\in P_{0}}\frac{1}{s})A-4b_{1}w(\sum_{i_{1}<i_{2}}\frac{1}{s_{i_{1}}s_{i_{2}}})A+8b_{1}w(\sum_{i_{1}<i_{2}<i_{3}}\frac{1}{s_{i_{1}}s_{i_{2}}s_{i_{3}}})A-\ldots\\
& = & b_{1}wA(2(\sum_{s\in P_{0}}\frac{1}{s})-4(\sum_{i_{1}<i_{2}}\frac{1}{s_{i_{1}}s_{i_{2}}})+8(\sum_{i_{1}<i_{2}<i_{3}}\frac{1}{s_{i_{1}}s_{i_{2}}s_{i_{3}}})\ldots)
\end{align*}
En ajoutant et en retranchant $1$ dans la grande parenthèse, nous trouvons:
\[
b_{1}wA(1-\prod_{s\in P_{0}}(\frac{s-2}{s})).
\]
Il en résulte que:
\[
|T|=|V|+b_{1}wA((1-\prod_{s\in P_{0}}(\frac{s-2}{s}))
\]
ce qui peut s'écrire:
\[
|T|=b_{1}w(1-\prod_{p\in P_{1}}(\frac{p-1}{p})\prod_{s\in P_{0}}(\frac{s-2}{s})).
\]
\end{proof}
Ceci termine la preuve de la proposition.
\end{proof}
Nous avons eu besoin dans la démonstration du résultat précédent des trois lemmes élémentaires suivants que l'on peut trouver dans \cite{St}:
\begin{lemma}\label{lemmaC4}
Soient $a$ et $b$ deux entiers premiers entre eux et soit $c$ un autre entier. Alors l'ensemble des entiers $\rho$ tels que $b|c+a\rho$ est une classe à gauche de $\mathbb{Z} \pmod{b}$.
\end{lemma}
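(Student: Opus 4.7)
Le plan est de ramener la condition arithmétique $b\mid c+a\rho$ à une congruence linéaire $a\rho\equiv -c\pmod{b}$, puis d'invoquer l'inversibilité de $a$ modulo $b$, qui découle immédiatement de l'hypothèse $\gcd(a,b)=1$.

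Plus précisément, je commencerais par réécrire la condition de divisibilité sous la forme équivalente $a\rho\equiv -c\pmod{b}$. Ensuite, puisque $a$ et $b$ sont premiers entre eux, l'identité de Bézout fournit deux entiers $u$ et $v$ tels que $au+bv=1$, ce qui entraîne $au\equiv 1\pmod{b}$: autrement dit, la classe de $u$ est un inverse de celle de $a$ dans l'anneau $\mathbb{Z}/b\mathbb{Z}$. En multipliant la congruence par $u$, on obtient l'équivalence
\[
b\mid c+a\rho \;\Longleftrightarrow\; \rho\equiv -uc\pmod{b}.
\]

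Il en résulte que l'ensemble des $\rho$ cherchés coïncide exactement avec $-uc+b\mathbb{Z}$, c'est-à-dire une unique classe à gauche de $\mathbb{Z}$ modulo $b$. Cette classe est évidemment non vide (elle contient $-uc$ lui-même), ce qui exclut le cas dégénéré d'ensemble vide, et on peut au besoin choisir comme représentant canonique l'unique $\rho_{0}\in\{0,1,\ldots,b-1\}$ tel que $\rho_{0}\equiv -uc\pmod{b}$, conformément à l'usage fait de ce lemme dans l'étape 3 ci-dessus.

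Il n'y a ici aucun obstacle conceptuel: le lemme est une instance élémentaire du fait que $a$ est une unité de $\mathbb{Z}/b\mathbb{Z}$ dès que $\gcd(a,b)=1$, et la seule vérification à soigner est la double implication entre la divisibilité et la congruence, qui est tautologique. Le véritable intérêt du lemme réside dans son application répétée à l'étape 3, où il justifie que chaque $S_{x}(p)$ et chaque $S_{y}(q)$ est bien une réunion finie de classes modulo $b_{1}w$.
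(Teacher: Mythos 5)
Votre démonstration est correcte et repose essentiellement sur le même argument que celle du texte : l'hypothèse $\gcd(a,b)=1$ et l'identité de Bézout ramènent la condition $b\mid c+a\rho$ à une congruence $\rho\equiv\rho_{0}\pmod{b}$ ayant une unique classe de solutions. Le texte paramètre toutes les solutions de l'équation diophantienne $bm-a\rho=c$ tandis que vous inversez $a$ modulo $b$, mais c'est la même idée sous deux habillages équivalents.
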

\begin{proof}
Soit $\rho$ dans $\mathbb{Z}$ tel que $b|c+a\rho$: il existe $m$ dans $\mathbb{Z}$ tel que $bm=c+a\rho$. On cherche donc les couples $(m,\rho)\in \mathbb{Z}\times\mathbb{Z}$ tels que 
\[
bm-a\rho=c
\]
soit satisfaite. Comme $pgcd(a,b)=1$, l'équation précédente a toujours des solutions. Si $(m_{0},\rho_{0})$ est une solution particulière, alors toutes les solutions sont données par: $m=m_{0}+\theta a$, $\rho=\rho_{0}+\theta b$ avec $\theta \in \mathbb{Z}$, d'où le résultat.
\end{proof}
\begin{remark}
Nous pouvons choisir $\rho_{0}$ de telle sorte que $0\leqslant \rho_{0}<b$ si $b$ est positif. Nous avons toujours fait un tel choix.
\end{remark}
\begin{lemma}\label{lemmaC5}
Soient $a$ et $b$ deux entiers premiers entre eux et $a_{0}$ et $b_{0}$ deux autres entiers. on pose: $A:=\{a_{0}+\rho a | \rho \in \mathbb{Z}\}$ et $B:=\{b_{0}+\rho b |  \rho \in \mathbb{Z}\}$. Alors il existe un entier $\rho_{0}$ tel que $A\cap B=\{a_{0}+\rho_{0}a+\pi ab | \pi \in \mathbb{Z}\}$.
\end{lemma}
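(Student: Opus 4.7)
The plan is to establish this standard CRT-type result in three short steps. The underlying idea is that the intersection of two arithmetic progressions with coprime common differences is itself an arithmetic progression of common difference $ab$.

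First I would show that $A\cap B$ is nonempty. An element of $A$ lies in $B$ iff $a_0+\rho a\equiv b_0\pmod{b}$, i.e.\ $\rho a\equiv b_0-a_0\pmod{b}$. Since $\gcd(a,b)=1$, the integer $a$ is invertible modulo $b$, so such a $\rho$ exists; call one such solution $\rho_0$. Then $x_0:=a_0+\rho_0 a\in A\cap B$, and we may write $x_0=b_0+\sigma_0 b$ for some $\sigma_0\in\mathbb{Z}$.

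Next I would show the inclusion $A\cap B\subseteq\{a_0+\rho_0 a+\pi ab\mid\pi\in\mathbb{Z}\}$. If $x\in A\cap B$, write $x=a_0+\rho a=b_0+\sigma b$. Subtracting from the identity $x_0=a_0+\rho_0 a=b_0+\sigma_0 b$ gives
\[
(\rho-\rho_0)a=(\sigma-\sigma_0)b.
\]
As $\gcd(a,b)=1$, it follows that $b\mid \rho-\rho_0$, hence $\rho-\rho_0=\pi b$ for some $\pi\in\mathbb{Z}$, and thus $x=x_0+\pi ab$. The reverse inclusion is immediate: for any $\pi\in\mathbb{Z}$ the element $x_0+\pi ab=a_0+(\rho_0+\pi b)a$ belongs to $A$ and equals $b_0+(\sigma_0+\pi a)b\in B$.

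There is no real obstacle here; this is nothing more than Bézout combined with the observation that two simultaneous congruences modulo $a$ and $b$ (with $\gcd(a,b)=1$) determine a unique residue class modulo $ab$. The only care to take is to extract the specific presentation $a_0+\rho_0 a+\pi ab$ required by the statement, which falls out from choosing $\rho_0$ as in Step~1 rather than starting from an arbitrary element of the intersection.
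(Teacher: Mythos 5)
Your proof is correct and follows essentially the same route as the paper: both reduce the problem to the linear Diophantine equation $\rho a-\theta b=b_{0}-a_{0}$, use $\mathrm{pgcd}(a,b)=1$ to guarantee a particular solution $\rho_{0}$, and then parametrize all solutions to get the residue class modulo $ab$. Your version is just slightly more explicit in verifying the two inclusions separately, but there is no substantive difference.
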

\begin{proof}
Soit $x \in A\cap B$: $x=a_{0}+\rho a=b_{0}+\theta b$. Nous obtenons 
\[\rho a-\theta b=b_{0}-a_{0}.\]
 Comme $pgcd(a, b)=1$, l'équation précédente a toujours des solutions. En particulier $ A\cap B$ est non vide. Soit $(\rho_{0},\theta_{0})$ une solution particulière, alors toutes les solutions sont données par $\rho=\rho_{0}+\pi b$, $\theta=\theta_{0}+\pi a$ avec $\pi \in \mathbb{Z}$. Il en résulte que 
 \[
 A\cap B=\{a_{0}+\rho_{0}a+\pi ab | \pi \in \mathbb{Z}\}.
 \]
\end{proof}
\begin{lemma}\label{lemmaC6}
Soient $(E_{i})_{1\leqslant i \leqslant n}$ une famille d'ensembles finis et $E=\cup_{1\leqslant i \leqslant n}E_{i}$. Alors on a:
\[
|E|=\sum_{1}^{n}|E_{i}|-\sum_{1\leqslant i_{1}<i_{2}<n}|E_{i_{1}}\cap E_{i_{2}}|+\ldots+(-1)^{k-1}\sum_{1\leqslant i_{1}<i_{2}<\ldots<i_{k}\leqslant n}|E_{i_{1}}\cap E_{i_{2}}\cap\ldots\cap E_{i_{k}}|+\ldots
\]
\end{lemma}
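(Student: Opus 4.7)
Mon plan est de privilégier un argument de double comptage via les fonctions indicatrices, qui est plus conceptuel qu'une récurrence sur $n$ et évite les manipulations fastidieuses d'indices. J'indiquerai aussi l'approche alternative par récurrence pour complétude.

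\emph{Première approche (double comptage).} Pour chaque élément $x\in E$, je compte combien de fois $x$ apparaît dans le membre de droite. Soit $I(x):=\{i\in\{1,\ldots,n\}\mid x\in E_{i}\}$ et $m:=|I(x)|\geqslant 1$. Pour tout $J\subseteq\{1,\ldots,n\}$ avec $|J|=k$, l'élément $x$ appartient à $\bigcap_{i\in J}E_{i}$ si et seulement si $J\subseteq I(x)$, ce qui arrive pour exactement $\binom{m}{k}$ sous-ensembles $J$. La contribution totale de $x$ au membre de droite vaut donc
\[
\sum_{k=1}^{m}(-1)^{k-1}\binom{m}{k}=1-\sum_{k=0}^{m}(-1)^{k}\binom{m}{k}=1-(1-1)^{m}=1
\]
par la formule du binôme. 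Comme chaque $x\in E$ contribue exactement $1$, le membre de droite vaut $|E|$.

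\emph{Deuxième approche (récurrence sur $n$).} Le cas $n=2$ est élémentaire: $|E_{1}\cup E_{2}|=|E_{1}|+|E_{2}|-|E_{1}\cap E_{2}|$. Pour le pas inductif, on décompose $E=(E_{1}\cup\cdots\cup E_{n-1})\cup E_{n}$, on applique le cas $n=2$, puis l'hypothèse de récurrence d'une part à $E_{1}\cup\cdots\cup E_{n-1}$, d'autre part à la réunion $(E_{1}\cap E_{n})\cup\cdots\cup(E_{n-1}\cap E_{n})$ obtenue via la distributivité $\bigl(\bigcup_{i<n}E_{i}\bigr)\cap E_{n}=\bigcup_{i<n}(E_{i}\cap E_{n})$. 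Un regroupement des termes obtenus selon qu'ils contiennent ou non l'indice $n$ redonne exactement la formule annoncée.

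La \emph{difficulté principale} est mineure: il s'agit d'un résultat tout à fait classique, invoqué ici uniquement pour conclure le calcul de $|V|$ et de $|T|$ à l'étape 5. Le seul point demandant un peu de soin est le suivi des signes et des coefficients binomiaux, soit dans l'évaluation de $\sum_{k=1}^{m}(-1)^{k-1}\binom{m}{k}$ via la formule du binôme (première méthode), soit dans le regroupement par présence ou absence de l'indice $n$ (seconde méthode). Aucune idée profonde n'est requise.
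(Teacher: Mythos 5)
Votre démonstration est correcte. Votre approche principale (double comptage: pour chaque $x\in E$, avec $m=|I(x)|\geqslant 1$, la contribution au membre de droite vaut $\sum_{k=1}^{m}(-1)^{k-1}\binom{m}{k}=1-(1-1)^{m}=1$ par la formule du binôme) diffère réellement de celle du texte, qui procède par récurrence sur $n$ exactement comme votre seconde approche: cas $n=2$, décomposition $E=(\cup_{i\leqslant n-1}E_{i})\cup E_{n}$, distributivité $(\cup_{i\leqslant n-1}E_{i})\cap E_{n}=\cup_{i\leqslant n-1}(E_{i}\cap E_{n})$, application de l'hypothèse de récurrence et regroupement des termes. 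L'argument par double comptage est plus direct et évite le suivi des indices lors du regroupement, au prix d'invoquer l'identité binomiale; la récurrence est plus élémentaire et autonome, mais demande un peu plus de soin dans la réorganisation des sommes selon que l'indice $n$ figure ou non. Les deux voies sont complètes et établissent bien la formule du crible.
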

\begin{proof}
Elle se fait par récurrence sur $n$.\\ Si $n=2$, il est clair que $|E|=|E_{1}|+|E_{2}|-|E_{1}\cap E_{2}|$.\\
Supposons $n \geqslant 3$ et le résultat vrai pour $n-1$. Nous pouvons écrire 
\[
E=(\cup_{1\leqslant i \leqslant n-1}E_{i})\cup E_{n}
\]
donc, d'après le cas $n=2$, nous trouvons:
\[
|E|=|\cup_{1\leqslant i \leqslant n-1}E_{i}|+|E_{n}|-|(\cup_{1\leqslant i \leqslant n-1}E_{i})\cap E_{n}|
\]
Nous avons $(\cup_{1\leqslant i \leqslant n-1}E_{i})\cap E_{n}=\cup_{1\leqslant i \leqslant n-1}(E_{i}\cap E_{n})$ d'où d'après l'hypothèse de récurrence:
\[
|\cup_{1\leqslant i \leqslant n-1}(E_{i}\cap E_{n})|=\sum_{i=1}^{n-1}|E_{i}\cap E_{n}|-\sum_{1\leqslant i_{1}<i_{2}\leqslant n-1}|E_{i_{1}}\cap E_{i_{2}}\cap E_{n}|+\ldots
\]
En regroupant les termes, nous avons la formule annoncée.
\end{proof}

Nous gardons les notations de la proposition et nous supposons ici que $a_{1}$, $a_{2}$ et $a_{3}$ sont positifs. On peut choisir $x_{0}$ de telle sorte que $0<x_{0}<b_{3}$. On a choisi $\rho$ de telle sorte que $0\leqslant \rho < b_{1}w$. dans ces conditions on a:
\[
0<x=x_{0}+\rho b_{3}\leqslant b_{3}-1+(b_{1}w-1)b_{3}=b_{1}b_{3}w-1=a_{2}-1
\]
donc $0<x<a_{2}$.\\
Nous avons $\frac{c_{1}}{a_{1}}+\frac{x}{a_{2}}+\frac{y}{a_{3}}=1$. Il en résulte que $\frac{y}{a_{3}}=1-\frac{c_{1}}{a_{1}}-\frac{x}{a_{2}}$ donc, comme $\frac{c_{1}}{a_{1}}>0$ et $\frac{x}{a_{2}}>0$, on a $\frac{y}{a_{3}}<1$. Ensuite $\frac{c_{1}}{a_{1}}+\frac{x}{a_{2}}<2$ donc $-1<\frac{y}{a_{3}}$. Nous obtenons ainsi $-a_{3}<y<a_{3}$. Nous appelons \emph{réduite} toute solution $(c_{1},c_{2},c_{3})$ de $\sum_{i=1}^{3}b_{i}c_{i}=n$ telle que:
\[
0<c_{1}<a_{1},\,0<c_{2}<a_{2},\,|c_{3}|<a_{3}.
\]
Nous avons alors la
\begin{proposition}
Le nombre de solutions réduites de $\sum_{i=1}^{3}b_{i}c_{i}=n$ est:
\[
\varphi(n)w\prod_{s\in P_{0}}(\frac{s-2}{s}).
\]
\end{proposition}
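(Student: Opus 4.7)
The plan is to combine the enumeration of admissible parameters $\rho$ from Step~5 with the count of eligible first coordinates $c_1$. Since a reduced solution corresponds (via condition $(D)$) to an equation $\sum b_ic_i=n$ with the full coprimality $\gcd(c_i,a_i)=1$ for $i=1,2,3$, fixing $c_1$ with $0<c_1<a_1$ and $\gcd(c_1,a_1)=1$ gives $\varphi(a_1)$ admissible values. For each such $c_1$, the analysis in the proof of $(D)\Longrightarrow(C)$ shows that the remaining reduced pairs $(c_2,c_3)$ are in bijection with $\{0,1,\ldots,b_1w-1\}\setminus T$ through the parametrization $c_2=x_0+\rho b_3$ and $c_3=y_0-\rho b_2$, the condition $\rho\notin T$ being equivalent to $\gcd(c_2,a_2)=\gcd(c_3,a_3)=1$.

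Substituting the Step~5 formula for $|T|$, the number of good $\rho$ per admissible $c_1$ is $b_1w\prod_{p\in P_1}\frac{p-1}{p}\prod_{s\in P_0}\frac{s-2}{s}$, so the total count of reduced solutions equals
\[
N=\varphi(a_1)\,b_1w\prod_{p\in P_1}\frac{p-1}{p}\prod_{s\in P_0}\frac{s-2}{s}.
\]
The remaining work is to match $N$ with the announced value $\varphi(n)w\prod_{s\in P_0}\frac{s-2}{s}$, which reduces to the purely arithmetic identity
\[
\varphi(a_1)\,b_1\prod_{p\in P_1}\frac{p-1}{p}=\varphi(n).
\]

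To prove this identity I would compare both sides factor by factor at each prime $p\mid n$. Writing $\varphi(n)=n\prod_{p\mid n}\frac{p-1}{p}$ and $\varphi(a_1)b_1=n\prod_{p\mid a_1}\frac{p-1}{p}$, the claim becomes a prime-by-prime statement matching the multiset $P(a_1)\cup P_1$ (with multiplicities) against $P(n)=P(a_1)\cup P(b_1)$. The disjoint description $P_1=P(b_1)\sqcup(P(b_2)\cap P(w))\sqcup(P(b_3)\cap P(w))$, together with $P(a_1)=P(b_2)\cup P(b_3)\cup P(w)$, lets one enumerate the cases for $p$; and hypothesis $(C)$ --- in particular the fact from $(C_1)$ that for every prime $p$ two of the valuations $v_p(a_i)$ equal $v_p(n)$ while the third is at most $v_p(n)$, strictly so at $p=2$ by $(C_2)$ --- controls the overlap. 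The main obstacle is precisely the bookkeeping for primes $p\in P(w)\cap P(b_1b_2b_3)$, which a priori are counted in both $P(a_1)$ and $P_1$; the argument must show that the factor $w$ appearing outside the product in the target expression supplies exactly the needed $p$-contributions to absorb this doubly-counted set. Handling that compensation, via a case analysis on $v_p(a_i)$ tied to the conditions in $(C)$, is the crux of the proof.
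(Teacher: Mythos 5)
Your first two steps are exactly the paper's: the count is $\varphi(a_{1})\bigl(b_{1}w-|T|\bigr)=\varphi(a_{1})\,b_{1}w\prod_{p\in P_{1}}\frac{p-1}{p}\prod_{s\in P_{0}}\frac{s-2}{s}$, and everything then hinges on the identity $\varphi(a_{1})\,b_{1}\prod_{p\in P_{1}}\frac{p-1}{p}=\varphi(n)$. The gap is that you only outline a plan for this identity, and the plan cannot succeed: the identity is false whenever $Q:=\bigsqcup_{i}(P(b_{i})\cap P(w))$ (your $P(w)\cap P(b_{1}b_{2}b_{3})$) is non-empty. With $Q_{i}:=P(b_{i})-P(w)$ one has the disjoint decompositions $P(a_{1})=Q\sqcup Q_{2}\sqcup Q_{3}\sqcup P_{0}$, $P_{1}=Q\sqcup Q_{1}$ and $P(n)=Q\sqcup Q_{1}\sqcup Q_{2}\sqcup Q_{3}\sqcup P_{0}$, so the primes of $Q$ contribute to both $P(a_{1})$ and $P_{1}$ and
\[
\varphi(a_{1})\,b_{1}\prod_{p\in P_{1}}\frac{p-1}{p}
=n\prod_{p\in P(a_{1})}\frac{p-1}{p}\prod_{p\in P_{1}}\frac{p-1}{p}
=\varphi(n)\prod_{q\in Q}\frac{q-1}{q}.
\]
The outer factor $w$ cannot ``absorb'' the surplus, since it multiplies both candidate answers identically and carries no prime-by-prime information: the compensation you hope for does not occur.

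Concretely, take $(a_{1},a_{2},a_{3})=(9,9,3)$, which satisfies $(D)$ (e.g.\ $\frac{1}{9}+\frac{2}{9}+\frac{2}{3}=1$), hence $(C)$. Here $n=9$, $w=3$, $(b_{1},b_{2},b_{3})=(1,1,3)$, $Q=\{3\}$, $P_{0}=\emptyset$; direct enumeration of the reduced solutions of $c_{1}+c_{2}+3c_{3}=9$ gives $12$ of them, in agreement with $\varphi(a_{1})b_{1}w\prod_{p\in P_{1}}\frac{p-1}{p}=6\cdot 3\cdot\frac{2}{3}=12$, whereas $\varphi(n)w=18$. The count one actually obtains from your (and the paper's) first step is therefore $\varphi(n)\,w\prod_{q\in Q}(\frac{q-1}{q})\prod_{s\in P_{0}}(\frac{s-2}{s})$, which coincides with the stated formula only when $\gcd(w,b_{1}b_{2}b_{3})=1$. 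Be aware that the paper's own proof follows the same route and performs this very $Q,Q_{1},Q_{2},Q_{3},P_{0}$ bookkeeping, but one copy of $\prod_{q\in Q}\frac{q-1}{q}$ disappears between two displayed lines; your instinct that the doubly-counted primes of $Q$ are the crux is exactly right, and carrying the computation through shows the extra factor is genuinely needed.
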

\begin{proof}
Nous avons $\varphi(a_{1})$ possibilités pour $c_{1}$, donc le nombre de solutions réduites de $\sum_{i=1}^{3}b_{i}c_{i}=n$ est:
\[
\varphi(a_{1})b_{1}w\prod_{p\in P_{1}}(\frac{p-1}{p})\prod_{s\in P_{0}}(\frac{s-2}{s}).
\]
Nous posons:
\[
Q:=\bigsqcup_{1\leqslant i \leqslant3}(P(b_{i})\cap P(w))
\]
\[
Q_{i}:=P(b_{i})-P(w) \; (1\leqslant i \leqslant 3).
\]
Alors
\[
P(a_{1})=Q\bigsqcup Q_{2}\bigsqcup Q_{3}\bigsqcup P_{0}
\]
et des formules semblables pour $P(a_{2})$ et $P(a_{3})$. De plus $P_{1}=Q\bigsqcup Q_{1}$, d'où:
\[
\prod_{p\in P_{1}}(\frac{p-1}{p})=\prod_{q\in Q}(\frac{q-1}{q})\prod_{p_{1}\in Q_{1}}(\frac{p_{1}-1}{p_{1}}).
\]
Enfin
\[
\varphi(a_{1})=b_{2}b_{3}w\prod_{q\in Q}(\frac{q-1}{q})\prod_{p_{2}\in Q_{2}}(\frac{p_{2}-1}{p_{2}})\prod_{p_{3}\in Q_{3}}(\frac{p_{3}-1}{p_{3}})\prod_{s\in P_{0}}(\frac{s-1}{s})
\]
donc le nombre de solutions de $\sum_{i=1}^{3}b_{i}c_{i}=n$ est:
\[
nw\prod_{q\in Q}(\frac{q-1}{q})\prod_{p_{1}\in Q_{1}}(\frac{p_{1}-1}{p_{1}})\prod_{p_{2}\in Q_{2}}(\frac{p_{2}-1}{p_{2}})\prod_{p_{3}\in Q_{3}}(\frac{p_{3}-1}{p_{3}})\prod_{s\in P_{0}}(\frac{s-1}{s})\prod_{s\in P_{0}}(\frac{s-2}{s}).
\]
Comme $n=b_{1}b_{2}b_{3}w$, nous avons $P(n)=Q\bigsqcup Q_{1}\bigsqcup Q_{2}\bigsqcup Q_{3}\bigsqcup P_{0}$ d'où:
\[
\varphi(n)=n\prod_{q\in Q}(\frac{q-1}{q})\prod_{p_{1}\in Q_{1}}(\frac{p_{1}-1}{p_{1}})\prod_{p_{2}\in Q_{2}}(\frac{p_{2}-1}{p_{2}})\prod_{p_{3}\in Q_{3}}(\frac{p_{3}-1}{p_{3}})\prod_{s\in P_{0}}(\frac{s-1}{s})
\]
Finalement le nombre de solutions de $\sum_{i=1}^{3}b_{i}c_{i}=n$ est:
\[
\varphi(n)w\prod_{s\in P_{0}}(\frac{s-2}{s}).
\]
\end{proof}
Nous appliquons la proposition \ref{propC1} à un résultat sur la génération des groupes diédraux.
\begin{proposition}\label{propC3}
Soit $D:=<g,s|g^n=s^2=1,sgs^{-1}=g^{-1}>$ un groupe diédral d'ordre $2n$ avec $n\geqslant 3$. On appelle $G$ son sous-groupe cyclique d'ordre $n$. Soient $a_{1},a_{2},a_{3}$ trois entiers $>0$. Les deux conditions suivantes sont équivalentes:
\begin{itemize}
  \item (A) Il existe trois involutions non centrales $s_{i}$ $(1\leqslant i \leqslant 3)$ de $D$ telles que si $s_{i}s_{j}=g_{k}$ $(1 \leqslant i \neq j \leqslant 3, \; |\{i,j,k\}|=3)$ avec $g_{k}$ d'ordre $a_{k}$, on ait $G=<g_{i},g_{j}> \;(1 \leqslant i \neq j \leqslant 3)$.
  \item (B) Le triple d'entiers $(a_{1},a_{2},a_{3})$ satisfait à la condition (C).
\end{itemize}
\end{proposition}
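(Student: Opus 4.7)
Le plan est de traduire la condition $(A)$ en la condition arithmétique $(D)$ portant sur les exposants des générateurs du sous-groupe cyclique $G$, puis de conclure par la Proposition \ref{propC1}.

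Pour $(A)\Longrightarrow (B)$, j'écrirais d'abord chaque involution non centrale sous la forme $s_i=sg^{h_i}$ avec $h_i\in\mathbb{Z}$. Un petit calcul dans $D$ donne $s_is_j=g^{h_j-h_i}$, donc si $g_k=g^{\ell_k}$ désigne le produit correspondant, on obtient automatiquement $\ell_1+\ell_2+\ell_3\equiv 0\pmod n$ par télescopage. La condition $G=\langle g_i,g_j\rangle$ équivaut, dans le groupe cyclique $G$, à $ppcm(a_i,a_j)=n$, ce qui est précisément $(C_1)$. Puisque $g_k$ est d'ordre $a_k=n/b_k$, on a $pgcd(\ell_k,n)=b_k$, d'où $\ell_k=b_kc_k$ avec $pgcd(c_k,a_k)=1$; la relation de somme se réécrit alors $\sum_i c_i/a_i\in\mathbb{Z}$, c'est-à-dire $(D)$. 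La Proposition \ref{propC1} livre $(C)$.

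Pour la réciproque $(B)\Longrightarrow (A)$, j'utiliserais la Proposition \ref{propC1} pour obtenir $(c_1,c_2,c_3)$ vérifiant $(D)$, puis je poserais $\ell_i:=b_ic_i$ et définirais $s_1:=s$, $s_2:=sg^{\ell_3}$, $s_3:=sg^{-\ell_2}$. Ces éléments sont des involutions non centrales de $D$ pour $n\geqslant 3$ (tout élément de la forme $sg^h$ l'est automatiquement). Il resterait à vérifier que $s_1s_2=g^{\ell_3}=g_3$, $s_3s_1=g^{\ell_2}=g_2$ et $s_2s_3=g^{-\ell_2-\ell_3}=g^{\ell_1}=g_1$ (la dernière égalité utilisant $\ell_1+\ell_2+\ell_3\equiv 0\pmod n$), et que chaque $g_i$ est bien d'ordre $a_i$ grâce à $pgcd(c_i,a_i)=1$. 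Enfin $\langle g_i,g_j\rangle$, étant cyclique d'ordre $ppcm(a_i,a_j)=n$ d'après $(C_1)$, coïncide avec $G$.

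Le point essentiel est conceptuel plutôt que technique: il faut percevoir la correspondance entre la structure du triplet $(g_1,g_2,g_3)$ du côté dihédral (avec la contrainte automatique $g_1g_2g_3=1$) et les triplets $(c_1,c_2,c_3)$ satisfaisant $(D)$ du côté arithmétique. Une fois ce pont établi, toute la difficulté a été absorbée par la Proposition \ref{propC1}. Aucun obstacle technique majeur n'est attendu, seuls les choix de signes dans la construction explicite des $s_i$ demandent un peu de rigueur.
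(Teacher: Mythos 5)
Votre preuve est correcte et suit globalement la même architecture que celle du papier (traduction arithmétique, appel à la Proposition \ref{propC1}, construction explicite des involutions pour la réciproque), mais la direction $(A)\Longrightarrow(B)$ emprunte un chemin différent pour la condition $(C_{2})$. Le papier établit $(C_{1})$ comme vous (via $G=\langle g_{i},g_{j}\rangle \Leftrightarrow ppcm(a_{i},a_{j})=n$), puis démontre $(C_{2})$ \emph{directement} par un argument de parité sur les deux classes de conjugaison d'involutions non centrales lorsque $n$ est pair: si $v_{2}(a_{1})=v_{2}(a_{2})=v_{2}(n)$, alors $s_{1}$ et $s_{2}$ s'écrivent $g^{2\alpha+1}s_{3}$ et $g^{2\beta+1}s_{3}$, d'où $g_{3}=g^{2(\alpha-\beta)}$ et $v_{2}(a_{3})<v_{2}(n)$. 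Vous obtenez au contraire $(C_{2})$ en dérivant d'abord $(D)$ par le télescopage $\ell_{1}+\ell_{2}+\ell_{3}\equiv 0 \pmod{n}$ avec $\ell_{k}=b_{k}c_{k}$, $pgcd(c_{k},a_{k})=1$, puis en invoquant l'implication $(D)\Longrightarrow(C)$ de la Proposition \ref{propC1}; c'est un peu plus économe (un seul argument couvre $(C_{1})$ et $(C_{2})$) au prix de déléguer toute la substance à la proposition arithmétique, là où le papier rend visible la raison \emph{géométrique} de la contrainte $2$-adique (les deux classes d'involutions). Pour la réciproque, vos choix $s_{1}=s$, $s_{2}=sg^{\ell_{3}}$, $s_{3}=sg^{-\ell_{2}}$ coïncident, aux signes et à l'étiquetage près, avec les $s_{2}:=s_{1}g_{2}$, $s_{3}:=s_{1}g_{3}$ du papier, et votre vérification des ordres et de la génération est complète. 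Seul point à expliciter proprement dans une rédaction finale: la convention cyclique $g_{3}=s_{1}s_{2}$, $g_{1}=s_{2}s_{3}$, $g_{2}=s_{3}s_{1}$ qui rend le télescopage exact (remplacer un produit par son inverse ne change ni l'ordre ni la conclusion, mais il faut fixer la convention une fois pour toutes).
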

\begin{proof}
1) Montrons que $(A)\Longrightarrow(B)$. Comme $G=<g_{i},g_{j}>$ $(1 \leqslant i \neq j \leqslant 3)$, nécessairement $ppcm(a_{i},a_{j})=n$ $(1 \leqslant i \neq j \leqslant 3)$ donc la condition $(C_{1})$ est satisfaite. Supposons maintenant $n$ pair, $n=2m$. Nous avons vu qu'il existe $i$ et $j$ $(1\leqslant i \leqslant 3)$ tels que $v_{2}(a_{i})=v_{2}(a_{j})=v_{2}(n)$. Les conjugués de $s_{3}$ dans $D$ sont les $g^{2l}s_{3}$ $(0\leqslant l \leqslant m-1)$ et l'autre classe d'involutions non centrales est l'ensemble des $g^{2l+1}s_{3}$ $(0\leqslant l \leqslant m-1)$. Nous avons $s_{1}s_{2}=g_{3}$, $s_{1}s_{3}=g_{2}$, $s_{2}s_{3}=g_{1}$ donc $g_{3}=g_{2}g_{1}^{-1}$. Si nous supposons que $v_{2}(a_{1})=v_{2}(a_{2})=v_{2}(n)$, alors $s_{1}=g^{2\alpha+1}s_{3}$ et $s_{2}=g^{2\beta+1}s_{3}$. Il en résulte que $g_{3}=s_{1}s_{2}=g^{2\alpha-2\beta}$ et alors $v_{2}(a_{3})<v_{2}(n)$: la condition $(C_{2})$ est satisfaite.\\
2) Montrons que $(B)\Longrightarrow(A)$. Nous avons vu que les conditions $(C)$ et $(D)$ étaient équivalentes. Soient $c_{i}\in \mathbb{Z}$ $(1 \leqslant i \leqslant 3)$ tels que $pgcd(c_{i},a_{i})=1$ $(1 \leqslant i \leqslant 3)$ et $\sum_{1 \leqslant i \leqslant 3}\frac{c_{i}}{a_{i}}\in\mathbb{Z}$. Nous reprenons les notations de la proposition \ref{propC1}: $n=a_{i}b_{i}$ et si $pgcd(a_{1},a_{2},a_{3})w$, $n=b_{1}b_{2}b_{3}w$, $a_{1}=b_{2}b_{3}w$, $a_{2}=b_{3}b_{1}w$ et $a_{3}=b_{1}b_{2}w$. Les éléments d' ordre $a_{i}$ de $G$ sont les $g^{b_{i}x_{i}}$ avec $pgcd(x_{i},a_{i})=1$ $(1 \leqslant i \leqslant 3)$.\\
On cherche trois involutions non centrales $s_{1}, s_{2},s_{3}$ de $D$ telles que $s_{i}s_{j}=g_{k}$ soit d'ordre $a_{k}$ $(1 \leqslant i < j \leqslant 3,|\{i,j,k\}|=3)$. On cherche donc $x_{1},x_{2},x_{3}$ tels que $pgcd(x_{i},a_{i})=1$ $(1 \leqslant i  \leqslant 3)$ et comme $g_{1}=g_{3}^{-1}g_{2}$ $g^{b_{1}x_{1}}=g^{b_{2}x_{2}-b_{3}x_{3}}$ ou encore $b_{1}x_{1}\equiv b_{2}x_{2}-b_{3}x_{3} \pmod{n}$. D'après la proposition \ref{propC1}, il existe des entiers $c_{1},c_{2},c_{3}$ tels que $pgcd(c_{i},a_{i})=1$ $(1 \leqslant i  \leqslant 3)$  et $b_{1}c_{1}-b_{2}c_{2}+b-{3}c_{3}=n$. Soit $s_{1}$ une involution non centrale quelconque de $D$. Posons $g_{2}:=g^{b_{2}c_{2}}$, $g_{3}:=g^{b_{3}c_{3}}$, $s_{2}:=s_{1}g_{2}$ et $s_{3}:=s_{1}g_{3}$. La condition (A) est satisfaite.
\end{proof}

\begin{remark}
Jean-Yves Hée a généralisé la proposition \ref{propC1} en remplaçant $\mathbb{Z}$ par un anneau factoriel quelconque  et $3$ par un nombre quelconque de variables ($\geqslant 3$).
\end{remark}

\begin{center}
Université de Picardie Jules Verne\\
 Pôle Scientifique\\
Laboratoire LAMFA, UMR CNRS 7352\\
33, rue Saint Leu\\
80039 Amiens Cedex\\
francois.zara@u-picardie.fr
\end{center}

\end{document}